\title{On the topology of random real complete intersections}
\author{Michele Ancona}
\thanks{Institut de Recherche Math\'ematique Avanc\'ee, Universit\'e de Strasbourg.\\
 \emph{E-mail address}: \url{michele.ancona@math.unistra.fr}. }
\date{}
\theoremstyle{plain}
\newtheorem{thm}{Theorem}[section]
\newtheorem{lemma}[thm]{Lemma}
\newtheorem{prop}[thm]{Proposition}
\newtheorem{cor}[thm]{Corollary}
\newtheorem{oss}[thm]{Remark}
\newtheorem{example}[thm]{Example}
\theoremstyle{definition}
\newtheorem{defn}[thm]{Definition}
\newtheorem{conv}[thm]{Notation}
\renewcommand{\P}{\mathbf{P}}
\newcommand{\R}{\mathbf{R}}
\newcommand{\C}{\mathbf{C}}
\newcommand{\crit}{\textrm{crit}}
\newcommand{\Vol}{\textrm{Vol}}
\begin{document}
\maketitle
\begin{abstract} 
Given a real projective variety $X$ and $m$ ample  line bundles $L_1,\dots L_m$ on $X$ also defined over $\R$, we study the topology of the real locus of the complete intersections defined by global sections  of $L_1^{\otimes d}\oplus\cdots\oplus L^{\otimes d}_m$. We prove that the Gaussian measure of  the space of sections  defining real complete intersections with high total Betti number (for example, maximal complete intersections) is exponentially small, as $d$ grows to infinity. This is deduced by proving that, with very high probability, the real locus of a  complete intersection defined by a section  of $L_1^{\otimes d}\oplus\dots\oplus L^{\otimes d}_m$ is isotopic to the real locus of a complete intersection of smaller degree. 
\end{abstract}
\section{Introduction}
The goal of this paper is to study the topology of  real complete intersections inside a real algebraic variety. 
More precisely, we are interested in the study of the Betti numbers of their real loci, as the degree of the complete intersections goes to infinity. 
As the complex locus of such complete intersections gives restrictions on the topology of their real locus, we will start by recalling what happens in the complex case before passing to the real one. 
\subsection{Topology of complex complete intersections}
Let $X$ be a smooth complex projective variety of dimension $n$ and $L_1,\dots,L_m$ be ample line bundles over $X$. For any $d>0$, we will denote by $L_i^d$  the $d$-th tensor power of $L_i$. 
 Let us denote by $\Delta_d$ the discriminant locus in $H^0(X,\oplus_{i=1}^mL_i^d)$, that is, the space of sections of $\oplus_{i=1}^mL_i^d$ that do not vanish transversally (or, equivalently, the space of sections $s$ whose zero locus $Z_s$ is singular).  By Bertini's theorem, for $d$ large enough, the discriminant locus $\Delta_d$ is a complex algebraic hypersurface of $H^0(X,\oplus_{i=1}^mL_i^d)$ and then its complement $H^0(X,\oplus_{i=1}^mL_i^d)\setminus \Delta_d$ is connected. Therefore, given any pair of sections $s,s'\in H^0(X,\oplus_{i=1}^mL_i^d)\setminus \Delta_d$, one can always find a path in $H^0(X,\oplus_{i=1}^mL_i^d)\setminus \Delta_d$ joining them and, by Ehresmann's theorem, an isotopy between  their zero loci $Z_s$ and $Z_{s'}$. In particular, this implies that the zero loci $Z_s$ and $Z_{s'}$ are  diffeomorphic and, then, that their Betti numbers are the same. As a consequence, the total Betti number $b_*(Z_s)$ of the zero locus of a  section $s\in H^0(X,\oplus_{i=1}^mL_i^d)\setminus \Delta_d$ only depends on $d$ and on $L_1,\dots,L_m$, and not on the  choice of $s$. In particular, one can compute the value $b_*(Z_s)$, for $s\in H^0(X,\oplus_{i=1}^mL_i^d)\setminus \Delta_d$, as a function of $d$ and find  the asymptotics 
\begin{equation}\label{ineq in intro}
b_*(Z_s)=\mathrm{v}d^n+O(d^{n-1}),
\end{equation}
 where $\mathrm{v}=\mathrm{v}(L_1,\dots,L_m)$ is a topological constant only depending on the line bundles $L_i$, see Lemma \ref{totalBetti}.
\subsection{Topology of real complete intersections} Let us now suppose that $X$ is defined over $\R$. By this, we mean that the complex variety $X$ is equipped with an anti--holomorphic involution $c_X:X\rightarrow X$, called the \emph{real structure}. We call the pair $(X,c_X)$ a \emph{real algebraic variety.} For example, the complex projective space $\P^n$ equipped with the standard conjugaison $\textit{conj}:x\in\P^n\mapsto\bar{x}\in\P^n$ is a real algebraic variety. More generally, the solutions of a system of homogeneous real polynomial equations in $n+1$ variables define a real algebraic variety $X$ inside $\P^n$, whose real structure  is the restriction of $\textit{conj}$ to $X$.
The \emph{real locus} $\R X$ of a real algebraic variety is the set  of fixed points of the real structure, that is $\R X=\mathrm{Fix}(c_X)$. It is either empty or a finite union of $n$-dimensional smooth manifolds. 
 
 Let us also suppose that $L_1,\dots,L_m$ are real holomorphic line bundles, that is, they are equipped with real structures $c_{L_1},\dots,c_{L_m}$ that are linear anti-holomorphic in the fibers and such that $\pi_i\circ c_{L_i}=c_X\circ \pi_i$, where $\pi_i:L_i\rightarrow X$ is the natural projection. 
 
 For any $d>0$, let $\R H^0(X,\oplus_{i=1}^mL_i^{d})$ be the space of real sections of $\oplus_{i=1}^mL_i^{ d}$, that is, the space of sections $s=(s_1,\dots,s_m)\in H^0(X,\oplus_{i=1}^mL_i^{ d})$ such that $s_i\circ c_X=c_{L_i^d}\circ s_i$, for any $i\in\{1,\dots,m\}$. 
 Let $\R \Delta_d$ be the real discriminant locus, that is the space of real sections that do not vanish transversally.  By Bertini theorem, $\R\Delta_d$ is a real hypersurface in $\R H^0(X,\oplus_{i=1}^mL_i^{ d})$. In contrast to the complex setting, $\R H^0(X,\oplus_{i=1}^mL_i^{d})\setminus \R\Delta_d$ is \emph{not} connected and this produces the following phenomenon: the topology of the real locus $\R Z_s$ of a real section $s\in\R H^0(X,\oplus_{i=1}^mL_i^{d})$ depends on the choice of the section. Indeed, the real discriminant creates walls inside $\R H^0(X,\oplus_{i=1}^mL_i^{ d})$, and the Betti numbers of the real locus of a complete intersection change when we cross a wall.

 This raises a natural question: what is the topology of $\R Z_s$, if we pick $s$ at random?
 This question is moreover motivated by the fact that the number of connected components of $\R H^0(X,\oplus_{i=1}^mL_i^{d})\setminus \R\Delta_d$ grows very fast as $d\rightarrow\infty$ and then a deterministic study of all the topologies seems unproachable. For example, the number of connected component of $\R H^0(\P^n,\mathcal{O}(d))\setminus \R\Delta_d$ grows super-exponentially in $d$, see \cite{orevkov}.
 \medskip

The main result of this paper (Theorem \ref{theorem approximation}) is that, with very high probability, the real locus of  $Z_s$, with $s$ a real section of $\oplus_{i=1}^mL_i^d$, is diffeomorphic to the real locus of $Z_{s'}$, with $s'$ a (well--chosen) real section of $\oplus_{i=1}^m L_i^{d'}$, for a  sufficiently smaller $d'<d$. As a consequence we will prove that complete intersections whose real loci have high total Betti number are very rare (Theorem \ref{theorem rarefaction}).
\subsection{Probability measure} In order to state our main results, let us introduce the probability measure we consider. We  equip each real holomorphic line bundle $L_i$ with a smooth  Hermitian metric $h_i$ that is real (meaning $c_{L_i}^*h_i=\bar{h}_i$) and of positive curvature $\omega_i$.  The space of real global sections $\R H^0(X,\oplus_{i=1}^mL_i^d)$ is then naturally equipped with a   $\mathcal{L}^2$-scalar product defined by
\begin{equation}\label{l2 scalar}
\langle s,s'\rangle_{\mathcal{L}^2}=\displaystyle\sum_{i=1}^m\int_X h_i^d(s_i,s_i')\frac{\omega_i^{\wedge n}}{n!}
\end{equation}
for any pair of real global sections $s=(s_1,\dots,s_m)$ and $s'=(s_1',\dots,s_m')$ in $\R H^0(X,\oplus_{i=1}^mL_i^d)$, where $h_i^d$ is the real Hermitian metric on $L_i^d$ induced by $h_i$.
In turn, the $\mathcal{L}^2$-scalar product \eqref{l2 scalar} naturally induces   a Gaussian probability measure $\mu_d$ defined by
\begin{equation}\label{gaussian measure}
\mu_{d}(A)=\frac{1}{\sqrt{\pi}^{N_d}}\int_{s\in A}e^{-\norm{s}^2_{\mathcal{L}^2}}\mathrm{d}s
\end{equation}
for any open set $A\subset \R H^0(X,\oplus_{i=1}^m L_i^d)$, where $N_d$ is the dimension of $\R H^0(X,\oplus_{i=1}^mL_i^d)$ and $\mathrm{d}s$ the Lebesgue measure induced by the $\mathcal{L}^2$-scalar product \eqref{l2 scalar}. 

 The probability space we will consider is then $\big(\R H^0(X,\oplus_{i=1}^mL_i^d),\mu_d\big)$. A random section $s=(s_1,\dots,s_m)\in\R H^0(X,\oplus_{i=1}^mL_i^d)$ gives us a random real subvariety $Z_s=Z_{s_1}\cap\dots\cap Z_{s_m}$ with real locus $\R Z_s=\R Z_{s_1}\cap\dots\cap \R Z_{s_m}=Z_s\cap \R X$.
\begin{example}[Kostlan polynomials]\label{Kost} When $(X,c_X)$ is the $n$-dimensional projective space and $(L,c_L,h)$ is the degree $1$ real holomorphic line bundle  equipped with the standard Fubini-Study metric, then the vector space $\R H^0(X,L^{d})$ is isomorphic to the space $\R_d^{hom}[X_0,\dots,X_n]$ of degree $d$ homogeneous real polynomials in $n+1$ variables and the $\mathcal{L}^2$-scalar product is the one which makes the family of monomials $\big\{\sqrt{\binom{(n+d)!}{n!\alpha_0!\cdots\alpha_n!}}X_0^{\alpha_0}\cdots X_n^{\alpha_n}\big\}_{\alpha_0+\dots+\alpha_n=d}$ an orthonormal basis. 
Up to scalar multiplication, this is the only scalar product on $\R_d^{hom}[X_0,\dots,X_n]$ which is invariant by the action of the orthogonal group $O(n+1)$ (acting on the variables $X_0,\dots,X_n$) and such that the standard monomials are orthogonal to each other.

A random polynomial with respect to the Gaussian probability measure induced by this scalar product is called a Kostlan polynomial \cite{ko,ss}.
\end{example}

\subsection{Statements of the main results} Let us state the main result of the paper.
\begin{thm}\label{theorem approximation} Let $X$ be a real algebraic variety  and $L_1,\dots,L_m$ be   real Hermitian line bundles  of positive curvature. 
\begin{enumerate}
\item There exists a positive $\alpha_0<1$ such that for any $\alpha_0< \alpha <1$ the following happens:  the probability that,  for a real  section $s\in \R H^0(X,\oplus_{i=1}^mL_i^d)$, there exists a real section $s'\in \R H^0(X,\oplus_{i=1}^mL_i^{\lfloor  \alpha d \rfloor})$ such that the pairs $(\R X,\R Z_s)$ and $(\R X,\R Z_{s'})$ are isotopic,  is at least $1-O(d^{-\infty})$, as $d\rightarrow\infty$. (Here, the notation $O(d^{-\infty})$ stands for $O(d^{-k})$ for any $k\in\mathbb{N}$.)
\item  For any $k\in \mathbb{N}$ there exists a positive constant $c$ such that the following happens: the probability that, for a real section $s\in \R H^0(X,\oplus_{i=1}^mL_i^d)$, there exists a real section $s'\in \R H^0(X,\oplus_{i=1}^mL_i^{d-k})$ such that the pairs $(\R X,\R Z_s)$ and $(\R X,\R Z_{s'})$ are isotopic,  is at least $1-O(e^{-c\sqrt{d}\log d})$, as $d\rightarrow\infty$. If moreover the real Hermitian metrics on $L_1,\dots,L_m$ are  analytic, this probability is  at least $1-O(e^{-cd})$, as $d\rightarrow\infty$
\end{enumerate}
\end{thm}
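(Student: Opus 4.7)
The plan combines three ingredients: reduction to a model class of sections of the form $\sigma_0 \cdot s'$, a quantitative isotopy criterion, and off-diagonal Bergman kernel estimates for the tail bounds. First, I would fix a real auxiliary section $\sigma_0 \in \R H^0(X, \oplus_i L_i^{d-d'})$ whose components are nowhere vanishing on $\R X$; such $\sigma_0$ exists for $d - d'$ large in part (1) and for $d - d' = k$ fixed in part (2), by positivity of the $L_i$ (for example, a high tensor power of a single fixed real section with no zeros on $\R X$). For any $s' \in \R H^0(X, \oplus_i L_i^{d'})$ the product $\sigma_0 \cdot s'$ satisfies $\R Z_{\sigma_0 \cdot s'} = \R Z_{s'}$, so the sought-after isotopy $(\R X, \R Z_s) \simeq (\R X, \R Z_{s'})$ reduces to constructing $s'$ so that $s$ and $\sigma_0 \cdot s'$ lie in the same connected component of $\R H^0(X, \oplus_i L_i^d) \setminus \R \Delta_d$.

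Given such an $s'$, I would consider the affine path $s_t := (1-t)s + t\, \sigma_0 s'$, $t \in [0,1]$. By Ehresmann's theorem, if $s_t$ avoids the real discriminant for every $t$, the pairs $(\R X, \R Z_{s_t})$ are pairwise isotopic. A sufficient condition is the quantitative estimate
\[
\norm{s - \sigma_0 s'}_{C^1(U)} < \tau(s),
\]
where $U$ is a fixed complex neighbourhood of $\R X$ and $\tau(s)>0$ is a transversality radius measuring the $C^1(U)$-distance of $s$ from the real discriminant. The theorem would then follow from two events holding with high probability: (A) the section $s'$ can be chosen so that $\norm{s - \sigma_0 s'}_{C^1(U)}$ is small; and (B) $\tau(s)$ is not too small.

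For event (A), I would construct $s'$ as a Bergman-type ``division by $\sigma_0$'' projection of $s$, so that $s - \sigma_0 s'$ is $\mathcal{L}^2$-orthogonal to the image of multiplication by $\sigma_0$ on $\R H^0(X, \oplus_i L_i^{d'})$. The key estimate is pointwise on $U$: the pointwise covariance kernel of this orthogonal complement is controlled by the off-diagonal decay of the Bergman kernel of degree $d'$ at scale $1/\sqrt{d'}$. This decay is polynomial of arbitrary order $d^{-\infty}$ when $d-d' \asymp d$, yielding the bound in part (1); it is $O(e^{-c\sqrt{d}\log d})$ in the $C^\infty$ case at fixed gap $d-d'=k$, sharpened to $O(e^{-cd})$ under real-analyticity by sharp off-diagonal analytic Bergman estimates, yielding the two bounds in part (2). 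Event (B) is a standard Gaussian small-ball argument for real sections and their gradients on $\R X$, giving $\tau(s) \geq d^{-N}$ off an event of exponentially small probability, which is dominated by the bound coming from (A). The hard part will be (A): obtaining the sharp off-diagonal Bergman kernel bounds in the required regimes and transferring them from $\mathcal{L}^2$-orthogonality to uniform $C^1$-control on a complex neighbourhood of $\R X$, which in the real-analytic regime demands working on a Grauert tube of $\R X$.
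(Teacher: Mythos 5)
Your architecture coincides with the paper's: an auxiliary real section $\sigma$ with $\R Z_\sigma=\emptyset$ (the paper fixes $\sigma\in\R H^0(X,\oplus_i L_i^k)$ once and for all and uses its powers $\sigma^{\lfloor td\rfloor}$ for part (1)), the $\mathcal{L}^2$-orthogonal decomposition of $s$ against the image of multiplication by $\sigma$, Bergman-type decay for the $\mathcal{C}^1$-norm of the orthogonal part, and a quantitative Thom isotopy once that norm is beaten by the distance to the real discriminant. There is, however, a genuine gap in your event (B), which is where most of the paper's new work actually sits (you locate the difficulty in (A), but the complete-intersection version of (A) reduces componentwise to the hypersurface estimates of the earlier paper).

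The real discriminant is a codimension-one cone, so the Gaussian measure of $\{\mathrm{dist}_{\Sigma_d}(s)\le\delta\norm{s}_{\mathcal{L}^2}\}$ is \emph{linear} in $\delta$ (with a $d$-dependent constant); hence the event $\tau(s)<d^{-N}$ has probability of order $d^{C-N}$ --- polynomially small, never exponentially small. Your bookkeeping ``$\tau(s)\ge d^{-N}$ off an exponentially small event'' is therefore false, and as stated it cannot produce the exponential bounds of part (2): there the threshold for $\tau(s)$ must itself be taken of order $e^{-c\sqrt d\log d}$ (resp.\ $e^{-cd}$), and one must check that the estimate from (A) still dominates it. Moreover, making the linear-in-$\delta$ tube estimate effective is not a ``standard small-ball argument'': one needs $\Sigma_d$ to lie in a real algebraic hypersurface of degree $O(d^n)$ --- the paper computes this degree via Lefschetz pencils and Euler-characteristic asymptotics of complete intersections --- and then a tube/Crofton theorem of B\"urgisser--Cucker type; finally the resulting $\mathcal{L}^2$-distance lower bound must be converted into a $\mathcal{C}^1(\R X)$-distance lower bound, which the paper does with peak and first-order peak sections, the rank-deficiency condition ``$\nabla s(x)$ not surjective'' making this comparison the genuinely new point relative to the hypersurface case. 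A smaller remark: transversality along $\R X$ involves only the $1$-jet of $s$ on $\R X$, so $\mathcal{C}^1(\R X)$-control suffices throughout; insisting on uniform control on a complex neighbourhood or Grauert tube is unnecessary and makes (A) harder than it needs to be.
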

Hence, in the sense of measure, most topologies of the real locus of the intersection  of degree $d$ real ample divisors  can be found in lower degree. 
Let us stress that this is a real phenomenon: for $d$ large enough,  the complex loci $Z_s$ and $Z_s'$ of sections of $L^{d}$ and $L^{\lfloor \alpha d \rfloor}$, $\alpha<1$, are not diffeomorphic (because, by \eqref{ineq in intro}, we have $b_*(Z_s)>b_*(Z_{s'})$), while, by Theorem \ref{theorem approximation}, their real loci are diffeomorphic, and even isotopic, with very high probability.

For Kostlan polynomials (see Example \ref{Kost}), such approximation was proved in \cite{breiding,diatta},
 while for $m=1$, that is for random real hypersurfaces in a general real algebraic variety $X$, Theorem \ref{theorem approximation} coincides with \cite[Theorem 1.4]{anc5}.  Theorem \ref{theorem approximation} is then  natural generalization of these results for real complete intersections in a real algebraic variety $X$.
 
\begin{oss} In Theorem \ref{theorem approximation}, one can also allow different tensor powers of the line bundles, that is, one can consider $L_1^{d_1}\oplus\dots\oplus L_m^{d_m}$. In this case, one should set $d:=\max\{d_1,\dots,d_m\}$ and Theorem \ref{theorem approximation}(1)  becomes as follows: for any $\alpha_0<\alpha<1$, the real locus of a real section of $L_1^{d_1}\oplus\dots\oplus L_m^{d_m}$ is isotopic to the real locus of a real section of $L_1^{\min\{\alpha d,d_1\}}\oplus\dots\oplus L_m^{\min\{\alpha d,d_m\}}$,  with  probability at least $1-O(d^{-\infty})$, as $d$ goes to infinity. (Theorem \ref{theorem approximation}(2) also has an analogous statement in this case.)

 The proof of this slightly more general version of Theorem \ref{theorem approximation} is the same as the one that we present in the article. We decided to work with $d_1=\cdots=d_m=d$  for  clarity of exposition and in order to avoid a heavy notation.

\end{oss}

Let us now explain one consequence of Theorem \ref{theorem approximation}. Recall that by the Smith-Thom inequality \cite{thomreelle},  the total Betti number of the real locus $\R X$ of a real algebraic variety is bounded from above by the total Betti number of its complex locus:  
\begin{equation}\label{smith-thom}
\sum_{i=0}^n\dim H_i(\R X, \mathbf{Z}/2)\leq \sum_{i=0}^{2n}\dim H_i(X, \mathbf{Z}/2).
\end{equation}
We will more compactly write $b_*(\R X)\leq b_*(X)$, where $b_*$ denotes the total Betti number with $\mathbf{Z}/2$-coefficients. 
For a algebraic curve $C$, Smith-Thom inequality is known as Harnack-Klein inequality \cite{harnack, klein} and reads 
$$b_0(\R C)\leq g(C)+1$$
where $g(C)$ denotes the genus of $C$.

Putting together Smith-Thom inequality \eqref{smith-thom} and the asymptotics \eqref{ineq in intro} for the total Betti number of a complete intersection, we find 
$$b_*(\R Z_s)\leq \mathrm{v}(L_1,\dots,L_m)d^n+O(d^{n-1}),$$
for any $s\in\R H^0(X,\oplus_{i=1}^mL_i^d)$.

\begin{thm}\label{theorem rarefaction} Let $X$ be a real algebraic variety of dimension $n$ and $L_1,\dots,L_m$ be a  real  Hermitian line bundles  of positive curvature.
\begin{enumerate}
\item For any $\epsilon>0$ small enough, we have 
$$\mu_d\big\{s\in\R H^0(X,\oplus_{i=1}^mL_i^d), b_*(\R Z_s)\geq  (1-\epsilon)b_*(Z_s)\big\}\leq O(d^{-\infty})$$
as $d\rightarrow\infty$. (Here, the notation $O(d^{-\infty})$ stands for $O(d^{-k})$ for any $k\in\mathbb{N}$.)
\item For any $a>0$  there exists $c>0$ such that 
$$\mu_d\big\{s\in\R H^0(X,\oplus_{i=1}^mL_i^d), b_*(\R Z_s)\geq b_*( Z_s)-ad^{n-1}\big\}\leq O(e^{-c\sqrt{d}\log d})$$
as $d\rightarrow\infty$.
Moreover, if the real Hermitian metrics on $L_1,\dots,L_m$ are  analytic,  then
$$\mu_d\big\{s\in\R H^0(X,\oplus_{i=1}^mL_i^d), b_*(\R Z_s)\geq b_*( Z_s)-ad^{n-1}\big\}\leq O(e^{-cd}).$$
\end{enumerate}
Here, the measure $\mu_d$ is the Gaussian measure defined in Equation \eqref{gaussian measure}. 
\end{thm}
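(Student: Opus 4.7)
The strategy is to deduce Theorem~\ref{theorem rarefaction} directly from Theorem~\ref{theorem approximation} by combining it with the Smith--Thom inequality \eqref{smith-thom} and the complex asymptotic \eqref{ineq in intro}. The basic observation is that whenever the pairs $(\R X,\R Z_s)$ and $(\R X,\R Z_{s'})$ are isotopic the real loci $\R Z_s$ and $\R Z_{s'}$ are diffeomorphic, so that
$$b_*(\R Z_s)=b_*(\R Z_{s'})\leq b_*(Z_{s'})$$
by Smith--Thom. Thus any upper bound on the \emph{complex} total Betti number of the smaller-degree intersection $Z_{s'}$ automatically becomes an upper bound on $b_*(\R Z_s)$ on the high-probability event provided by Theorem~\ref{theorem approximation}.

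For part~(1), given $\epsilon>0$ with $\epsilon<1-\alpha_0^n$ (this is the quantitative meaning of ``$\epsilon$ small enough''), I fix $\alpha\in(\alpha_0,1)$ with $\alpha^n<1-\epsilon$. Theorem~\ref{theorem approximation}(1) produces, with probability at least $1-O(d^{-\infty})$, a section $s'\in\R H^0(X,\oplus_{i=1}^m L_i^{\lfloor\alpha d\rfloor})$ with $\R Z_s$ isotopic to $\R Z_{s'}$. Combining the observation above with \eqref{ineq in intro} applied to $Z_{s'}$ yields
$$b_*(\R Z_s)\leq b_*(Z_{s'})=\mathrm{v}\,\alpha^n d^n+O(d^{n-1})<(1-\epsilon)\mathrm{v}\,d^n+O(d^{n-1})=(1-\epsilon)b_*(Z_s)+O(d^{n-1}).$$
Since the leading term $((1-\epsilon)-\alpha^n)\mathrm{v} d^n$ is of order $d^n$ and dominates the $O(d^{n-1})$ correction, the event $\{b_*(\R Z_s)\geq (1-\epsilon)b_*(Z_s)\}$ is, for $d$ large, contained in the complement of the approximation event, and so has measure at most $O(d^{-\infty})$.

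For part~(2), writing $b_*(Z_s)=\mathrm{v} d^n+r(d)$ with $r(d)=O(d^{n-1})$ a function depending only on $d$ (here I use that the complex topology of a smooth complete intersection in the class is constant in the section), I note that for any fixed integer $k\geq 1$ and any smooth section $s'$ of $\oplus_{i=1}^m L_i^{d-k}$,
$$b_*(Z_s)-b_*(Z_{s'})=\mathrm{v}\bigl(d^n-(d-k)^n\bigr)+r(d)-r(d-k)=\mathrm{v}\,nk\,d^{n-1}+O(d^{n-1}),$$
where the implicit constant in the remainder depends only on $\mathrm{v}$ and on the universal error in \eqref{ineq in intro}, not on $k$. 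I then pick $k\in\mathbb{N}$ large enough that $\mathrm{v} nk$ exceeds $a$ plus this implicit constant, so that $b_*(Z_s)-b_*(Z_{s'})>ad^{n-1}$ for all large $d$ and every such $s'$. Applying Theorem~\ref{theorem approximation}(2) for this $k$ yields the corresponding $c>0$, and on the resulting approximation event (of probability at least $1-O(e^{-c\sqrt{d}\log d})$, respectively $1-O(e^{-cd})$ in the analytic case) we conclude $b_*(\R Z_s)\leq b_*(Z_{s'})<b_*(Z_s)-ad^{n-1}$, as desired. I expect no serious obstacle: the only delicate point is ensuring that the $O(d^{n-1})$ error can be absorbed by choosing a single $k$ depending only on $a$, which is guaranteed by the universal (section-independent) nature of the remainder in \eqref{ineq in intro}.
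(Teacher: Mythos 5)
Your argument is correct and is essentially the paper's own proof: combine Theorem \ref{theorem approximation} with the Smith--Thom inequality and the asymptotic of Proposition \ref{totalBetti} for the lower-degree complete intersection, then absorb the $O(d^{n-1})$ errors by the choice of $\alpha$ (resp.\ $k$). Your part (2) in fact spells out the choice of $k$ as a function of $a$ more explicitly than the paper, which only says the argument is ``the same'' as for part (1).
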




Hence, real algebraic complete intersections in $X$ with high total Betti number are very rare. For the case of real curves in real algebraic surfaces, such result  was proved in \cite{gwexp} using different techniques from those in this article, in particular using the theory of laminary currents. In our case, Theorem \ref{theorem rarefaction}  is a consequence of the low degree approximation property given by Theorem \ref{theorem approximation}. This has already been observed first in the case of Kostlan complete intersections  in $\P^n$ in \cite{breiding,diatta} and then in the case of real hypersurfaces in a real algebraic variety in \cite{anc5}. 
\medskip  

Finally, let us recall that the expected Betti numbers of the real locus of a degree $d$ random complete intersection in a real $n$-dimensional algebraic variety $X$ are of order $d^{n/2}$, as $d\rightarrow\infty$, see \cite{gw2,gw3,gwlowerbound}.

\subsection{Idea of the proof of Theorem \ref{theorem approximation}} Let us give a sketch of proof of Theorem \ref{theorem approximation}\textit{(2)}. We first define a map $\R H^0(X,\oplus_{i=1}^mL^d_i)\rightarrow\R H^0(X,\oplus_{i=1}^mL^{d-k}_i)$, which will serve as "low degree approximation map". This map is constructed as follows. First, we fix once for all a real holomorphic section $\sigma=(\sigma_1,\dots,\sigma_m)$ of $\oplus_{i=1}^mL^{k}_i$, $k\in 2\mathbb{N}$, such that each $Z_{\sigma_i}$ is a smooth hypersurface with empty real locus. Then, we consider the $L^2$-orthogonal decomposition $\R H^0(X,\oplus_{i=1}^mL^d_i)=\R H_{\sigma}\oplus\R H_\sigma^{\perp}$, where $\R H_\sigma$ is the space of section of $\R H^0(X,\oplus_{i=1}^mL^d_i)$ which can be written in the form $(\sigma_1\otimes s'_1,\dots,\sigma_m\otimes s'_m)=:\sigma\otimes s'$. Then, every section $s$ can be  uniquely decomposed as $s=\sigma\otimes s'+s^{\perp}$, with $s^{\perp}\in \R H_\sigma^{\perp}$. Using this decomposition, the approximation map $\R H^0(X,\oplus_{i=1}^mL^d_i)\rightarrow\R H^0(X,\oplus_{i=1}^mL^{d-k}_i)$ we were looking for is $s\mapsto s'$. In order to prove that $\R Z_{s}$ is isotopic to $\R Z_{s'}$ with very high probability, we will use two arguments, one being deterministic and the other being probabilistic.
\begin{itemize}
\item The deterministic part consists in proving that the $\mathcal{C}^1$-norm of the section $s^{\perp}\in\R H_{\sigma}^{\perp}$ is exponentially small along $\R X$ (Propositions \ref{partialbergman} and \ref{logbergman}). This will be proved using the logarithmic Bergman kernel theory. Indeed we show that the $\mathcal{C}^1$-norm of the section $s^{\perp}$ concentrates around $\bigcup_{i=1}^mZ_{\sigma_i}$, that is, it is exponentially small outside a $1/\sqrt{d}$-neighborhood of $\bigcup_{i=1}^mZ_{\sigma_i}$. As each $Z_{\sigma_i}$ is disjoint from $\R X$, we deduce that the $\mathcal{C}^1(\R X)$-norm of $s^{\perp}$ is exponentially small. In this part of the proof we adapt to complete intersections the techniques developped in \cite{anc5} for hypersurfaces.
\item The probabilistic part consists in proving that, with very high probability, the $\mathcal{C}^1$-norm of $s$ is big enough along $\R X$.  This is proved by showing that, with very high probability, the distance from $s$ to the real discriminant is big enough (Proposition \ref{rapiddecay}) and then by relating this distance with the $\mathcal{C}^1(\R X)$-norm of $s$ (Proposition \ref{distance to the discriminant}). 
This  idea comes from  \cite{diatta} and was already used in \cite{anc5}. However, for the case of real complete intersections of a real algebraic variety, the techniques involved to prove these points are harder (even with respect to the one used in \cite{anc5} for the case of hypersurfaces of a real algebraic variety).  These arguments are developped in Sections \ref{Secdiscr} and \ref{secdistanceto} and could be of independent interest.
\end{itemize}
Putting together the previous two points, we get that $s$ is a $\mathcal{C}^1(\R X)$-small perturbation of $\sigma\otimes s'$  with very high probability . By Thom's isotopy lemma, this implies that $\R Z_s$ is isotopic to $\R Z_{\sigma\otimes s'}=\R Z_{s'}$, where the last equality follows from the fact that $\R Z_{\sigma}=\emptyset$.  
\subsection{Organization of the paper} The paper is organized as follows. In Section \ref{Secdiscr}, we study the topology of the complex complete intersection defined by a generic section of $H^0(X,\oplus_{i=1}^mL_i^d)$, $d\gg 1$. Using this, we compute the asymptotic of the degree of the discriminant locus $\Delta_d\subset H^0(X,\oplus_{i=1}^mL_i^d)$. In Section \ref{secdistanceto}, we study the function "distance to the real discriminant" defined on $\R H^0(X,\oplus_{i=1}^mL_i^d)$. This is done using some peak sections introduced in Section \ref{sec eval}.
In Section \ref{secorthogonaldecomposition}, we define and study a $\mathcal{L}^2$-orthogonal decomposition of $\R H^0(X,\oplus_{i=1}^mL_i^d)$ which plays a key role in the proof of the main results.
Finally, in Section \ref{SecProof}, we prove our main results, namely Theorems \ref{theorem approximation} and \ref{theorem rarefaction}.
\subsection*{Acknowledgements} Part of this article was written while I was in Tel Aviv and was supported  by the Israeli Science Foundation through the ISF Grants 382/15 and 501/18. I warmly thank Tel Aviv University for the excellent working conditions offered to me during the postdoc. 
\section{Complete intersections, Lefschetz pencils and degree of the discriminant}\label{Secdiscr}
Throughout this section, let $X$ be a complex projective variety  and $L_1,\dots,L_m$ be ample line bundles on $X$. In Section \ref{secAsymptTopol}, we  study the topology of the complex complete intersection $Z_{s_1}\cap\dots\cap Z_{s_m}$, where $s_i$ is a generic section of $L_i^d$ and $d$ is large. In Section \ref{seclefanddeg}, we study the discriminant hypersurface $\Delta_d\subset H^0(X,\oplus_{i=1}^mL_i^d) $ and, in particular, we compute its degree.
\subsection{Asymptotic topology of complete intersections}\label{secAsymptTopol} Let $\Delta_d\subset H^0(X,\oplus_{i=1}^mL_i^d)$ be the discriminant, that is the subset of sections $s=(s_1,\dots,s_m)$ which do not vanish transversally.  By Bertini's theorem, for $d$ large enough, the discriminant is an algebraic hypersurface and then, by Ehresmann's theorem,  the zero loci $Z_s$ and $Z_{s'}$ of two different sections $s,s'\in H^0(X,\oplus_{i=1}^mL_i^d)\setminus \Delta_d$ are diffeomorphic (and in fact, isotopic). In particular, the Euler characteristic and the total Betti number of a generic section of  $\oplus_{i=1}^mL_i^d$ only depend on $d$. The aim of this section is to give the asymptotics of such quantities, as $d\rightarrow\infty$.
\begin{prop}\label{euler} Let $L_1,\dots,L_m$ be ample line bundles over a complex projective  variety $X$ of dimension $n$. Let $s_1,\dots,s_m$ be generic holomorphic sections of $L^d_1,\dots,L^d_m$ and denote by $Z_{s_1},\dots,Z_{s_m}$ their vanishing loci. Then, as $d\rightarrow\infty$, we have the following asymptotic for the Euler characteristic of the complete intersection $Z_{s_1}\cap\dots\cap Z_{s_m}$:
$$\chi(Z_{s_1}\cap\cdots\cap Z_{s_m})=(-1)^{n-m}d^n\displaystyle\sum_{\substack{i_1+\dots+i_m=n-m \\ i_j\geq 0,\hspace{1mm} j\in\{1,\dots,m\}}}\int_Xc_1(L_1)^{i_1+1}\wedge\cdots\wedge c_1(L_m)^{i_m+1}+O(d^{n-1}).$$
\end{prop}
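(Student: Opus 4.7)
The plan is to invoke Bertini and Gauss--Bonnet--Chern, then turn the Euler characteristic into a Chern class integral on $X$ via the adjunction exact sequence, and finally extract the leading power of $d$.

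First I would apply Bertini's theorem to conclude that for generic sections $s_1,\dots,s_m$ the variety $Y:=Z_{s_1}\cap\cdots\cap Z_{s_m}$ is smooth of complex dimension $n-m$ and that the $Z_{s_i}$ meet transversally along $Y$. The normal bundle is then $N_{Y/X}=\bigoplus_{i=1}^m L_i^d|_Y$, and the short exact sequence
\[
0\to TY\to TX|_Y\to N_{Y/X}\to 0
\]
gives, in $H^*(Y,\Q)$, the identity
\[
c(TY)=c(TX)|_Y\cdot\prod_{i=1}^m\bigl(1+d\,c_1(L_i)|_Y\bigr)^{-1}=c(TX)|_Y\cdot\prod_{i=1}^m\sum_{k\ge 0}(-d)^k c_1(L_i)^k\big|_Y.
\]

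Next I would apply the Gauss--Bonnet--Chern theorem, $\chi(Y)=\int_Y c_{n-m}(TY)$, and push forward to $X$ using the fact that the fundamental class of $Y$ is $[Y]=d^m c_1(L_1)\cdots c_1(L_m)\in H^{2m}(X,\Q)$ (by transverse intersection). This gives
\[
\chi(Y)=d^m\int_X\Bigl[\,c(TX)\cdot\prod_{i=1}^m\sum_{k\ge 0}(-d)^k c_1(L_i)^k\Bigr]_{n-m}\cup c_1(L_1)\cdots c_1(L_m),
\]
where $[\,\cdot\,]_{n-m}$ denotes the degree $2(n-m)$ component.

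Finally I would isolate the leading coefficient in $d$. In the expansion, a term contributing $c_j(TX)\cdot\prod_i(-d)^{k_i}c_1(L_i)^{k_i}$ carries degree $j+\sum k_i=n-m$ and a $d$-weight of $d^{\sum k_i}=d^{n-m-j}$. Multiplying by the prefactor $d^m$, the overall power of $d$ is $d^{n-j}$, maximized by $j=0$, i.e.\ by keeping only the constant term $1$ from $c(TX)$. The coefficient of $d^n$ is therefore
\[
(-1)^{n-m}\!\!\sum_{\substack{k_1+\cdots+k_m=n-m\\ k_j\ge 0}}\int_X c_1(L_1)^{k_1+1}\wedge\cdots\wedge c_1(L_m)^{k_m+1},
\]
with all other terms of order $d^{n-1}$ or lower, yielding the announced asymptotic.

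The proof is essentially a bookkeeping exercise in intersection theory; the only genuine point of care is verifying that the pairing of the restriction $[\,\cdot\,]_{n-m}|_Y$ with $\int_Y$ correctly pushes forward to $\int_X(\,\cdot\,)\cup[Y]$, and that no lower-order Chern classes of $TX$ contribute to the $d^n$ coefficient. Both follow cleanly from the projection formula and a degree count, so I do not expect a substantive obstacle here.
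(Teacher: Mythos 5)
Your argument is correct and is essentially the same as the paper's: both rest on adjunction for the normal bundle of the complete intersection, the identity $\chi(Y)=\int_Y c_{n-m}(TY)$, pushforward to $X$ against $[Y]=d^m c_1(L_1)\cdots c_1(L_m)$, and the observation that only the $c_0(TX)$ term survives at order $d^n$. The only difference is organizational: you expand $c(TY)=c(TX)|_Y\cdot\prod_i(1+d\,c_1(L_i))^{-1}$ in one step via the rank-$m$ normal bundle sequence, whereas the paper iterates the codimension-one adjunction formula through the chain $Y_1\supset\cdots\supset Y_m$, pushing forward one hypersurface at a time.
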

\begin{proof} 
First remark that if $L$ is a line bundle on $X$ and if $Y$ is an hypersurface defined by a section of $L$, then the adjunction formula gives us the following equality between Chern classes:
\begin{equation}\label{Adjunction1}
c_{j}(Y)=\sum_{i=0}^{j}(-1)^ic_1(L)_{\mid Y}^i\wedge c_{j-i}(X)_{\mid Y}.
\end{equation}
Let us denote by $Y_i:=Z_{s_1}\cap\cdots\cap Z_{s_i}$, so that we obtain the chain of inclusions $$Y_1\supset Y_2\supset\dots\supset Y_m.$$
We will use several times Equation \eqref{Adjunction1} for this chain of subvarieties. 
We start by applying  \eqref{Adjunction1}  for $j=n-m$, $Y=Y_m$, $X=Y_{m-1}$ and $L=L_{m_{\mid Y_{m-1}}}^d$  and obtain
\begin{equation}\label{Adjunction2}
c_{n-m}(Y_m)=\sum_{i_m=0}^{n-m}(-1)^{i_m}d^{i_m}c_1(L_m)_{\mid Y_m}^{i_m}\wedge c_{n-m-i_m}(Y_{m-1})_{\mid Y_m}.
\end{equation}
Remark that $\chi(Y_m)=\int_{Y_m}c_{n-m}(Y_m)$, so that from \eqref{Adjunction2} we obtain 
\begin{multline}\label{Adjunction3}
\chi(Y_m)=\int_{Y_m}\sum_{i_m=0}^{n-m}(-1)^{i_m}d^{i_m}c_1(L_m)_{\mid Y_m}^{i_m}\wedge c_{n-m-i_m}(Y_{m-1})_{\mid Y_m} \\
=\sum_{i_m=0}^{n-m}(-1)^{i_m}d^{i_m}\int_{Y_{m-1}}c_1(L_m)_{\mid Y_{m-1}}^{i_m+1}\wedge c_{n-m-i_m}(Y_{m-1})
\end{multline}
where in the second equality we used the identity $\int_{Y_m}\alpha_{\mid Y_m}=\int_{Y_{m-1}}\alpha\wedge c_1(L_m)_{\mid Y_{m-1}}$ for any closed form $\alpha$ on $Y_{m-1}$. 

Applying  \eqref{Adjunction1} to $j=n-m-i_m$, $Y=Y_{m-1}$, $X=Y_{m-2}$ and $L=L^d_{{m-1}_{\mid Y_{m-2}}}$, we have that  \eqref{Adjunction3} equals
\begin{multline*}
\sum_{i_m=0}^{n-m}\sum_{i_{m-1}=0}^{i_m}(-1)^{i_m+i_{m-1}}d^{i_m+i_{m-1}}\int_{Y_{m-1}}c_1(L_m)_{\mid Y_{m-1}}^{i_m+1}\wedge c_1(L_{m-1})_{\mid Y_{m-1}}^{i_{m-1}}\wedge c_{n-m-i_m-i_{m-1}}(Y_{m-2})_{\mid Y_{m-1}}
\end{multline*}
\begin{equation}\label{Adjunction4}
=\sum_{i_m=0}^{n-m}\sum_{i_{m-1}=0}^{i_m}(-1)^{i_m+i_{m-1}}d^{i_m+i_{m-1}}\int_{Y_{m-2}}c_1(L_m)_{\mid Y_{m-2}}^{i_m+1}\wedge c_1(L_{m-2})_{\mid Y_{m-1}}^{i_{m-1}+1}\wedge c_{n-m-i_m-i_{m-1}}(Y_{m-2}).
\end{equation}
Continuing by induction, we find that the Euler characteristic $\chi(Y_m)$ of $Y_m$ is equal to 
\begin{multline}\label{Adjunction4}
\sum_{i_m=0}^{n-m}\sum_{i_{m-1}=0}^{i_m}\cdots\sum_{i_1=1}^{i_2}(-1)^{i_m+i_{m-1}+\dots+i_1}d^{i_m+i_{m-1}+\dots+i_1}\times \\
\times\int_{X}c_1(L_m)^{i_m+1}\wedge c_1(L_{m-2})^{i_{m-1}+1}\wedge\cdots\wedge c_1(L_m)^{i_m+1}\wedge c_{n-m-i_m-i_{m-1}-\dots-i_1}(X)\\
=(-1)^{n-m}d^n\displaystyle\sum_{\substack{i_1+\dots+i_m=n-m \\ i_j\geq 0,\hspace{1mm} j\in\{1,\dots,m\}}}\int_Xc_1(L_1)^{i_1+1}\wedge\cdots\wedge c_1(L_m)^{i_m+1}+O(d^{n-1}),
\end{multline}
where in the last equality we used that the dominant term as $d\rightarrow\infty$ is given by the indices $(i_1,\dots,i_m)$ such that $i_1+\dots+i_m=n-m$. Recalling that $Y_m=Z_1\cap\cdots\cap Z_m$, we have the result.
\end{proof}
\begin{prop}\label{totalBetti} Let $L_1,\dots,L_m$ be ample line bundles over a complex projective  variety $X$ of dimension $n$. Let $s_1,\dots,s_m$ be generic holomorphic sections of $L^d_1,\dots,L^d_m$ and denote by $Z_{s_1},\dots,Z_{s_m}$ their vanishing loci. Then, as $d\rightarrow\infty$, we have the following asymptotic for the total Betti number of $Z_{s_1}\cap\dots\cap Z_{s_m}$:
$$b_*(Z_{s_1}\cap\cdots\cap Z_{s_m})=\mathrm{v}(L_1,\dots,L_m)d^n+O(d^{n-1})$$
where 
$$\mathrm{v}(L_1,\dots,L_m)=\displaystyle\sum_{\substack{i_1+\dots+i_m=n-m \\ i_j\geq 0,\hspace{1mm} j\in\{1,\dots,m\}}}\int_Xc_1(L_1)^{i_1+1}\wedge\cdots\wedge c_1(L_m)^{i_m+1}.$$
\end{prop}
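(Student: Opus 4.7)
The plan is to reduce the statement to Proposition \ref{euler} by showing that, for a smooth complete intersection $Y := Z_{s_1}\cap\cdots\cap Z_{s_m}$, the total Betti number $b_*(Y)$ and the absolute value of the Euler characteristic $(-1)^{n-m}\chi(Y)$ differ only by an $O(1)$ term (i.e., a constant independent of $d$). Since Proposition \ref{euler} already gives $(-1)^{n-m}\chi(Y) = \mathrm{v}(L_1,\dots,L_m)\, d^n + O(d^{n-1})$, this reduction immediately yields the conclusion.

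To compare $b_*(Y)$ with $|\chi(Y)|$, the key input is the Lefschetz hyperplane theorem applied iteratively along the chain $X \supset Y_1 \supset Y_2 \supset \cdots \supset Y_m = Y$. Each $Y_j$ is a smooth ample divisor in $Y_{j-1}$ (for $d$ large, by Bertini), so the Lefschetz theorem yields isomorphisms $H_i(Y_j,\mathbf{Z}/2) \cong H_i(Y_{j-1},\mathbf{Z}/2)$ for $i < \dim Y_j = n-j$. Iterating, one obtains $b_i(Y) = b_i(X)$ for all $i < n-m$. By Poincaré duality on $Y$ (which is smooth), $b_i(Y) = b_{2(n-m)-i}(Y) = b_{2(n-m)-i}(X)$ for all $i > n-m$. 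Consequently, every Betti number $b_i(Y)$ with $i \neq n-m$ is bounded by a constant depending only on $X$, and in particular independent of $d$.

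From this I conclude both
\begin{equation*}
b_*(Y) = b_{n-m}(Y) + O(1), \qquad \chi(Y) = (-1)^{n-m} b_{n-m}(Y) + O(1).
\end{equation*}
Combining the two identities, $b_*(Y) = (-1)^{n-m}\chi(Y) + O(1)$, and substituting the asymptotic of Proposition \ref{euler} (noting that $\mathrm{v}(L_1,\dots,L_m)$ is a positive intersection number of ample classes, so the leading term has the expected sign) gives the claimed asymptotic $b_*(Y) = \mathrm{v}(L_1,\dots,L_m)\, d^n + O(d^{n-1})$.

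There is no real obstacle here: the main subtlety is simply checking that the hypotheses of the Lefschetz hyperplane theorem apply at each stage, which is guaranteed for $d \gg 1$ by the smoothness of $Y_j$ (generic sections) together with the ampleness of the line bundles $L_j^d$ restricted to $Y_{j-1}$. Everything else is bookkeeping: reducing the sum $b_* = \sum b_i$ to its only potentially large term $b_{n-m}$, and matching signs with the Euler characteristic.
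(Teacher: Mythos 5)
Your proof is correct and follows essentially the same route as the paper: iterate the Lefschetz hyperplane theorem along the chain $X\supset Y_1\supset\cdots\supset Y_m$ to fix all Betti numbers below the middle degree, use Poincar\'e duality to fix those above, and deduce $b_*(Y)=(-1)^{n-m}\chi(Y)+O(1)$ before invoking Proposition \ref{euler}. No substantive differences to report.
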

\begin{proof}
Remark that the restriction of $L_m$ to $Z_{s_1}\cap\cdots\cap Z_{s_{m-1}}$ is an ample line bundle and then, for $d$ large enough, it embeds  $Z_{s_1}\cap\cdots\cap Z_{s_{m-1}}$ into some complex projective space $\P^N$. In particular, $Z_{s_1}\cap\cdots\cap Z_{s_m}$  is obtained as the intersection of a generic hyperplane  $H$ of $\P^N$  with $Z_{s_1}\cap\cdots\cap Z_{s_{m-1}}$.

By Lefschetz hyperplane theorem, we have $b_i(Z_{s_1}\cap\cdots\cap Z_{s_m})=b_i(Z_{s_1}\cap\cdots\cap Z_{s_{m-1}})$ for any $i\in\{0,\dots,n-m-1\}$. By induction, we then obtain that $b_i(Z_{s_1}\cap\cdots\cap Z_{s_m})=b_i(X)$ for any $i\in\{0,\dots,n-m-1\}$. In particular, for any $i\in\{0,\dots,n-m-1\}$, the Betti number $b_i(Z_{s_1}\cap\cdots\cap Z_{s_m})$ does not depend on $d$. By Poincar\'e duality, we also have $b_i(Z_{s_1}\cap\cdots\cap Z_{s_m})=b_{2n-2m-i}(Z_{s_1}\cap\cdots\cap Z_{s_m})$, so that the only Betti number of $Z_{s_1}\cap\cdots\cap Z_{s_m}$ that depends on $d$ is $b_{n-m}(Z_{s_1}\cap\cdots\cap Z_{s_m})$. In particular $$b_*(Z_{s_1}\cap\cdots\cap Z_{s_m})=b_{n-m}(Z_{s_1}\cap\cdots\cap Z_{s_m})+O(1)=(-1)^{n-m}\chi(Z_{s_1}\cap\cdots\cap Z_{s_m})+O(1)$$ as $d\rightarrow\infty$. The result then follows from Proposition \ref{totalBetti}.
\end{proof}
\subsection{Lefschetz pencils and degree of the discriminant}\label{seclefanddeg} The main result of this section is the computation of the degree of the discriminant, see Lemma \ref{degree of discriminant}. This will use the estimates on the topology of a generic section $s\in H^0(X,\oplus_{i=1}^mL_i^d)\setminus \Delta_d$.

\begin{lemma}[Degree of the discriminant]\label{degree of discriminant} Let $L_1,\dots,L_m$ be a  ample line bundle over a complex projective variety $X$ of dimension $n$ and denote by $\Delta_d$  the discriminant  in $H^0(X,\oplus_{i=1}^{m}L_i^d)$. Then, there exists an homogeneous  polynomial $Q_d$ vanishing on $\Delta_d$ and such that $${\deg(Q_d)=r(L_1,\dots,L_m)d^n+O(d^{n-1})},$$
where $r(L_1,\dots,L_m)$ equals
\begin{multline}
\displaystyle\sum_{k=1}^m \bigg(\displaystyle\sum_{\substack{i_1+\dots+i_{k-1}+i_{k+1}\dots+i_{m}=n-m+1 \\ i_j\geq 0,\hspace{1mm} j\in\{1,\dots,m\}\setminus\{k\}}}\int_Xc_1(L_1)^{i_1+1}\wedge\cdots\wedge c_1(L_{m-1})^{i_{m-1}+1} \\ +\displaystyle\sum_{\substack{i_1+\dots+i_m=n-m \\ i_j\geq 0,\hspace{1mm} j\in\{1,\dots,m\}}}\int_Xc_1(L_1)^{i_1+1}\wedge\cdots\wedge c_1(L_m)^{i_m+1} \\ +\displaystyle\sum_{\substack{i_1+\dots+i_m+i_{m+1}=n-m-1\\ i_j\geq 0,\hspace{1mm} j\in\{1,\dots,m+1\}}}\int_Xc_1(L_1)^{i_1+1}\wedge\cdots\wedge c_1(L_k)^{i_k+i_{m+1}+2}\wedge\cdots\wedge c_1(L_m)^{i_m+1}\bigg).
\end{multline}
\end{lemma}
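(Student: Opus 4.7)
The plan is to compute $\deg(Q_d)$ by identifying it with the number of singular members in a family of pencils and then using a Lefschetz-fibration Euler-characteristic argument. The outer sum over $k$ in the formula for $r(L_1,\dots,L_m)$ strongly suggests taking $Q_d = \prod_{k=1}^{m} Q_d^{(k)}$, a product of $m$ ``partial discriminants'', where $Q_d^{(k)}$ vanishes on the locus $\Delta_d^{(k)} \subset \Delta_d$ of sections whose failure of transversality is due to the $k$-th component. Provided the decomposition $\Delta_d = \bigcup_{k=1}^m \Delta_d^{(k)}$ can be established, this gives $\deg(Q_d) = \sum_{k=1}^m \deg(Q_d^{(k)})$, accounting for the outer sum.

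For each fixed $k$, I would fix generic sections $s_j \in H^0(X, L_j^d)$ for $j \neq k$, set $\hat{Z}_k = \bigcap_{j \neq k} Z_{s_j}$ (smooth of dimension $n-m+1$ by Bertini), and choose a generic pencil $\{s_k + t s_k'\}_{t \in \P^1}$ in $H^0(X, L_k^d)$. The incidence variety
\[
I^{(k)} = \{(x, t) \in \hat{Z}_k \times \P^1 : s_k(x) + t s_k'(x) = 0\}
\]
is a hypersurface in $\hat{Z}_k \times \P^1$ of class $d\,c_1(L_k) + h$, with $h = c_1(\mathcal{O}_{\P^1}(1))$. A Bertini-type genericity argument, refined to control the singularity type, shows that $I^{(k)}$ is smooth and $\pi^{(k)} : I^{(k)} \to \P^1$ is a Lefschetz fibration with smooth generic fibre $F = Z_{s_1} \cap \cdots \cap Z_{s_m}$. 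Then $\deg(Q_d^{(k)})$ equals the number of singular fibres, which by the standard Lefschetz-pencil formula is
\[
\deg(Q_d^{(k)}) = (-1)^{n-m+1}\bigl(\chi(I^{(k)}) - 2\chi(F)\bigr).
\]

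Since $\chi(F) = (-1)^{n-m}\mathrm{v}(L_1,\dots,L_m)\, d^n + O(d^{n-1})$ by Proposition \ref{euler}, it remains to compute $\chi(I^{(k)})$ at leading order. This is done by the same iterated adjunction formula \eqref{Adjunction1} used in the proof of Proposition \ref{euler}, applied on the ambient $\hat{Z}_k \times \P^1$, whose total Chern class is $c(T_{\hat{Z}_k})(1 + 2h)$ and whose hypersurface class is $d\,c_1(L_k) + h$. Extracting the $d^n$ coefficient amounts to integrating a top cohomology class on $\hat{Z}_k \times \P^1$; since $h^2 = 0$, exactly one factor of $h$ survives integration. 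That single $h$ may originate from three sources: the $+h$ in the defining divisor $d\,c_1(L_k) + h$, the $2h$ in $c(T_{\P^1}) = 1 + 2h$, or the $h$-linear correction in the expansion $(1 + d\,c_1(L_k) + h)^{-1} = (1+d\,c_1(L_k))^{-1} - h\,(1+d\,c_1(L_k))^{-2}$. These three sources are designed to produce, after summation over $k$, precisely the three sums comprising $r(L_1,\dots,L_m)$: the first sum from $h$ coming from the defining divisor (with the $c_1(L_k)$-exponent effectively absent and exponents summing to $n-m+1$), the middle sum from the $2h$ in $c(T_{\P^1})$ combining with the $-2\chi(F)$ subtraction, and the third sum from the doubled $(1+d\,c_1(L_k))^{-2}$ factor (producing the elevated $c_1(L_k)$-exponent $i_k+i_{m+1}+2$).

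The main obstacle is twofold. First, establishing the decomposition $\Delta_d = \bigcup_k \Delta_d^{(k)}$, so that $Q_d = \prod_k Q_d^{(k)}$ indeed vanishes on all of $\Delta_d$, requires a local analysis of the singular loci of complete intersections (the non-transversality stratification in terms of which component is ``responsible''). Second, verifying that a generic pencil in the complete-intersection setting yields a genuine Lefschetz fibration requires a genericity argument finer than the usual Bertini theorem, controlling the types of singular fibres rather than just smoothness of $I^{(k)}$. The ensuing combinatorial bookkeeping to match the three-sum structure of $r(L_1,\dots,L_m)$ with the correct signs is mechanical but must be carried out with care, in particular tracking the $d^{m-1}$ coming from pushing forward from $\hat{Z}_k$ to $X$ in order not to lose contributions to the $d^n$ leading coefficient.
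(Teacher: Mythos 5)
Your per-index computation is essentially the paper's: for each $k$ you fix generic $s_j$, $j\neq k$, run a pencil in the $k$-th factor over the complete intersection $\hat Z_k=\bigcap_{j\neq k}Z_{s_j}$, and count singular fibres via $\chi(I^{(k)})-2\chi(F)$; since $\chi(I^{(k)})=\chi(\hat Z_k)+\chi(\text{base locus})$, this reproduces exactly the three Euler characteristics (ambient $(n-m+1)$-fold, generic fibre, base locus) whose asymptotics via Proposition \ref{euler} give the three sums in $r_k$. That part is sound.

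The genuine gap is in how you assemble the $m$ counts into $\deg(Q_d)$. The decomposition $\Delta_d=\bigcup_k\Delta_d^{(k)}$ with $Q_d=\prod_kQ_d^{(k)}$ cannot be established, because it is false: non-transversality of $(s_1,\dots,s_m)$ at $x$ is the joint condition that the $m$ differentials $\nabla s_1(x),\dots,\nabla s_m(x)$ fail to span, a determinantal condition not attributable to any single index, and for large $d$ the discriminant is irreducible (it is the image of an irreducible incidence variety fibred over $X$ with irreducible determinantal fibres), hence admits no such splitting into $m$ proper pieces. The correct replacement, which is what the paper does, is to use that $\Delta_d$ is invariant under independent rescaling of each component, so it descends to a hypersurface $\P\Delta_d$ in the multi-projective space $\prod_k\P H^0(X,L_k^d)$; the degree of a defining multi-homogeneous polynomial is then the sum over $k$ of the partial multidegrees $[\P\Delta_d]\cap(0,\dots,1,\dots,0)$, and each partial multidegree is computed by intersecting with a line moving only in the $k$-th factor --- i.e.\ precisely your pencil count. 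So the quantities you compute are the right ones; only their interpretation as degrees of factors of $Q_d$ must be replaced by the multidegree argument. (Separately, your concern about Lefschetz genericity of the pencil is handled in the paper by citation, and note the sign in your fibre-count formula should be $(-1)^{n-m}$, the parity of the vanishing-cycle contribution for fibres of dimension $n-m$.)
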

\begin{proof}  First, we remark that if $(s_1,\dots,s_m)\in \Delta_d$ then $(\lambda_1s_1,\dots,\lambda_ms_m)\in \Delta_d$   for any $\lambda_1,\dots,\lambda_m\in\C^*$. This implies that  the degree of $\Delta_d$  equals the number of intersection points of $\P\Delta_d\subset \oplus_{i=1}^m\P H^0(X,L_i^d)$ with a generic line $\gamma\subset\oplus_{i=1}^m\P H^0(X,L_i^d)$, whose homology class $[\gamma]$ is the class of multidegree $(1,\dots,1)\in H_2(\oplus_{i=1}^m\P H^0(X,L_i^d),\mathbf{Z})\simeq \oplus_{i=1}^mH_2(\P H^0(X,L_i^d),\mathbf{Z})$. This number equals the cap product  between the fundamental class   $[\P\Delta_d]$ of $\P\Delta_d$ and the class $(1,\dots,1)\in \oplus_{i=1}^mH_2(\P H^0(X,L_i^d),\mathbf{Z})$. Denoting by $\cap$ the cap product, we have the equality  
\begin{equation}\label{cap product}
\deg(\Delta_d)=[\P\Delta_d]\cap (1,\dots,1)=[\P\Delta_d]\cap (1,0,\dots,0)+[\P\Delta_d]\cap (0,1,\dots,0)+[\P\Delta_d]\cap (0,0,\dots,1).
\end{equation}
 Let us then compute each of these cap products separately, starting, for simplicity, with $[\P\Delta_d]\cap (0,0,\dots,1)$. In order to do this, let us
 consider $([s_1],\dots,[s_{m-1}])\in\oplus_{i=1}^{m-1} \P H^0(X,L_i^d)$ such that $Z_{s_1}\cap\dots\cap Z_{s_{m-1}}$ is a smooth complete intersection of codimension $m-1$ in $X$. We can then choose a  generic pair of sections $[s_m],[s_m']\in \P H^0(X,L_m^d)$ such that   the line $\gamma:=\lambda\big([s_1],\dots,[s_{m-1}],[s_m]\big)+\mu\big([s_1],\dots,[s_{m-1}], [s_m']\big)$, with $[\lambda:\mu]\in \P^1$, intersects transversally $\P\Delta_d$. We have  that the fundamental class of the line $[\gamma]$ is the class $(0,\dots,0,1)\in \oplus_{i=1}^mH_2(\P H^0(X,L_i^d),\mathbf{Z})$.
 
  Remark now that the map
 $u:Z_{s_1}\cap\dots\cap Z_{s_{m-1}}\dashrightarrow \P^{1}$ defined by $u(x)=[s_m(x):s_m'(x)]$ is a Lefschetz pencil on $Z_{s_1}\cap\dots\cap Z_{s_{m-1}}$ and  points in the intersection $\gamma\cap\P\Delta_d$ correspond to critical points of $u$.
 By \cite[Equation (1)]{anc}, the number of critical points $\#\crit(u)$ of $u$ equals $$
(-1)^{n-m+1}\chi(Z_{s_1}\cap\dots\cap Z_{s_{m-1}})+(-2)^{n-m}\chi(Z_{s_1}\cap\dots\cap  Z_{s_{m}})+(-1)^{n-m+1}\chi(Z_{s_1}\cap\dots\cap Z_{s_{m}}\cap Z_{s'_{m}}).
$$
By Proposition \ref{euler}, the latter equals $r_md^n+O(d^{n-1})$ where
\begin{multline}
r_m= \displaystyle\sum_{\substack{i_1+\dots+i_{m-1}=n-m+1 \\ i_j\geq 0,\hspace{1mm} j\in\{1,\dots,m-1\}}}\int_Xc_1(L_1)^{i_1+1}\wedge\cdots\wedge c_1(L_{m-1})^{i_{m-1}+1} \\ +\displaystyle\sum_{\substack{i_1+\dots+i_m=n-m \\ i_j\geq 0,\hspace{1mm} j\in\{1,\dots,m\}}}\int_Xc_1(L_1)^{i_1+1}\wedge\cdots\wedge c_1(L_m)^{i_m+1} \\ +\displaystyle\sum_{\substack{i_1+\dots+i_m+i_{m+1}=n-m-1\\ i_j\geq 0,\hspace{1mm} j\in\{1,\dots,m+1\}}}\int_Xc_1(L_1)^{i_1+1}\wedge\cdots\wedge c_1(L_m)^{i_m+i_{m+1}+2}.
\end{multline}
We then obtain $[\P\Delta_d]\cap (0,\dots,0,1)=r_md^n+O(d^{n-1})$.

Similarly, we have for any $k\in\{1,\dots,m\}$
\begin{equation}\label{critical2}
[\P\Delta_d]\cap (0,\dots,\underset{k-\textrm{th place}}{1},\dots,0)=r_kd^{n}+O(d^{n-1})
\end{equation}
with $r_k$ equal to
\begin{multline}\label{crit4}
r_k= \displaystyle\sum_{\substack{i_1+\dots+i_{k-1}+i_{k+1}\dots+i_{m}=n-m+1 \\ i_j\geq 0,\hspace{1mm} j\in\{1,\dots,m\}\setminus\{k\}}}\int_Xc_1(L_1)^{i_1+1}\wedge\cdots\wedge c_1(L_{m-1})^{i_{m-1}+1} \\ +\displaystyle\sum_{\substack{i_1+\dots+i_m=n-m \\ i_j\geq 0,\hspace{1mm} j\in\{1,\dots,m\}}}\int_Xc_1(L_1)^{i_1+1}\wedge\cdots\wedge c_1(L_m)^{i_m+1} \\ +\displaystyle\sum_{\substack{i_1+\dots+i_m+i_{m+1}=n-m-1\\ i_j\geq 0,\hspace{1mm} j\in\{1,\dots,m+1\}}}\int_Xc_1(L_1)^{i_1+1}\wedge\cdots\wedge c_1(L_k)^{i_k+i_{m+1}+2}\wedge\cdots\wedge c_1(L_m)^{i_m+1}.
\end{multline}
By summing Equation \eqref{critical2} over $k\in\{1,\dots,m\}$ we obtain 
\begin{equation}\label{critical3}
[\P\Delta_d]\cap (1,\dots,1)=(r_1+\dots+r_m)d^{n}+O(d^{n-1}).
\end{equation}
 The result then follows from Equations \eqref{cap product}, \eqref{critical3} and \eqref{crit4}.
\end{proof}

\section{Distance to the real discriminant and $1$-jet of sections}\label{secdistanceto}
Let us denote by $\R\Delta_d\subset \R H^0(X,\oplus_{i=1}^mL_i^d)$  the real locus of the discriminant, that is the subset of real sections $s=(s_1,\dots,s_m)$ which do not vanish transversally along $X$.
\begin{defn}\label{realdiscriminant} We denote $\Sigma_d\subset \R\Delta_d$  the space of real sections $s=(s_1,\dots,s_m)$ of $\oplus_{i=1}^mL^d_i$ which do not vanish transversally along $\R X$. We call $\Sigma_d$ the \textit{real discriminant}.
\end{defn}
   In this section, we estimate the function "distance to the real discriminant". This is the content of Lemma \ref{distance to the discriminant}, which is the main result of the section. In order to do this, in Section \ref{sec eval} we introduce the peak sections associated with $\oplus_{i=1}^mL_i^d$ and study their $\mathcal{C}^1$-norm.

\subsection{Evaluation and $1$-jet maps}\label{sec eval}   
Here, we introduce the so-called peak sections at real points of a real algebraic variety (see, for example,  \cite{tian,hor,gw1}).
This sections will be used to estimate the distance to the real discriminant.

Throughout this Section \ref{sec eval}, we consider a real ample holomorphic line bundle $L$ on $X$ equipped with a Hermitian metric $h$ with positive curvature $\omega$. This induces a scalar product on $\R H^0(X,L^d)$ defined by $$\langle s_1,s_2\rangle_{\mathcal{L}^2}=\int_X h^d(s_1,s_2)\frac{\omega^n}{n!}$$
for any $s_1,s_2\in \R H^0(X,L^d)$.
\begin{defn}[Evaluation maps]\label{defn eval} For any  $x\in \R X$, let $\R H_x$ be the kernel of the evaluation map  $${ev_x:s\in   \R H^0(X,L^d)\mapsto s(x)\in \R L^d_x}.$$ 
Similarly,  for any real tangent vector $v\in T_x^*\R X$ at $x$ we define the   map
$$ev_{2x,v}:s\in \R H_x \mapsto \nabla_v s(x)\in \R L^d_x,$$ where $\nabla$ is any connection on $L^d$ (indeed, if $s\in \R H_x$, then the value $\nabla s(x)$ does not depend on $\nabla$).
\end{defn}
\begin{defn}[Peak sections]\label{peak}  Let $x$ be a point in $\R X$.
A  \emph{peak section at $x$} is a generator $s_x$ of $(\ker ev_x)^{\perp}$ of unit $\mathcal{L}^2$-norm.
\end{defn}
\begin{defn}[First order peak sections]\label{ort basis of peak} Let $x$ be a points in $\R X$. \begin{itemize}
\item For any $v\in T_x\R X$,  a \emph{first order peak section at $x$ associated with the tangent vector $v$} is a generator $\tilde{s}_{v}$ of $(\ker ev_{2x,v})^{\perp}$ of unit $\mathcal{L}^2$-norm.
\item Let $\mathcal{B}=\{v_1,\dots,v_n\}$ be an orthonormal basis of $T_x\R X$. Let $\tilde{s}_{v_1},\dots,\tilde{s}_{v_n}$ be a first order peak sections at $x$ associated with the tangent vectors $v_1,\dots,v_n$. We call the \emph{first order peak sections at $x$ associated with the basis $\mathcal{B}$ } the sections of the orthonormal family $\{s_{v_1},\dots,s_{v_n}\}$ obtained by applying the Gram-Schmidt process to the family $\{\tilde{s}_{v_1},\dots,\tilde{s}_{v_n}\}$.
 \end{itemize}
\end{defn}
The next two lemmas estimate the pointwise norm of the peak sections and of their derivatives. These estimates are nowadays standard and are essentially proved in \cite{tian}. Since we use slightly different conventions, we will give a proof for the sake of completeness.
\begin{lemma}[Estimates of peak sections]\label{estimates of peak sections} We have the following uniform estimates for $x\in\R X$:
$\abs{s_x(x)}^2_{h^d}=\frac{d^n}{\pi^n}(1+O(d^{-1}))$ and $\abs{\nabla s_{x}(x)}^2_{h^d}=O(d^{n-1})$ , as $d\rightarrow\infty$. Here, we have denoted by $n$  the dimension of $X$, by $\nabla$  the Chern connection of $(L^d,h^d)$ and by $\abs{\cdot}_{h^d}$  the norm induced by $h^d$.
\end{lemma}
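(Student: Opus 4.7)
The plan is to derive both estimates from the Tian--Zelditch--Catlin expansion of the Bergman kernel, using the key observation that any real $\mathcal{L}^2$-orthonormal basis of $\R H^0(X, L^d)$ is simultaneously a Hermitian orthonormal basis of the complex space $H^0(X, L^d)$.

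For the pointwise norm I would complete $s_x$ to an $\mathcal{L}^2$-orthonormal basis $\{s_x, s_2, \ldots, s_{N_d}\}$ of $\R H^0(X, L^d)$ with $s_i \in \ker ev_x$ for $i \geq 2$; this is possible since $(\ker ev_x)^\perp$ is one-dimensional. The vanishing $s_i(x) = 0$ for $i \geq 2$ collapses the Bergman sum to $|s_x(x)|^2_{h^d} = \sum_i |s_i(x)|^2_{h^d} = B_d(x, x) = \frac{d^n}{\pi^n}(1 + O(d^{-1}))$, which is the claim.

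For the derivative estimate I would work in Bochner-type holomorphic normal coordinates $(z_1, \ldots, z_n)$ centered at $x$ with a real holomorphic frame $e_L$ satisfying $|e_L|^2_h(z) = e^{-\phi(z)}$ and $\phi(z) = |z|^2 + O(|z|^4)$. Writing $s_x = f_x e_L^d$, the Chern connection is $\nabla s_x = (df_x - d f_x \partial \phi) e_L^d$, and using $\partial \phi(0) = 0$ together with the holomorphy of $f_x$ reduces the real covariant derivative to $\nabla_{\partial/\partial x_j} s_x(x) = \partial_{z_j} f_x(0) \cdot e_L^d(0)$ for $\partial/\partial x_j \in T_x \R X$. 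In the orthonormal basis from the previous step, the frame coefficient $\widetilde B_d(z, w) := \sum_i f_i(z) \overline{f_i(w)}$ of the Bergman kernel satisfies $\partial_{z_j} \widetilde B_d(z, w)|_{(0, 0)} = \partial_{z_j} f_x(0) \cdot \overline{f_x(0)}$. Invoking the near-diagonal Tian--Zelditch expansion $\widetilde B_d(z, w) = \frac{d^n}{\pi^n} e^{dz \cdot \bar w}(1 + d^{-1} b_1(\sqrt d z, \sqrt d \bar w) + O(d^{-2}))$, the leading $\partial_{z_j}$-derivative produces a factor $d \bar w_j$ that vanishes at $w = 0$, while the subleading term contributes $\frac{d^n}{\pi^n} \cdot d^{-1/2} \cdot \partial_{Z_j} b_1(0, 0) = O(d^{n-1/2})$ via the chain rule $Z = \sqrt d z$. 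Dividing by $|f_x(0)| = O(d^{n/2})$ and summing over $j = 1, \ldots, n$ yields $|\nabla s_x(x)|^2_{h^d} = O(d^{n-1})$.

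The main technical input is the near-diagonal Bergman kernel expansion in Bochner coordinates, a by-now standard result (cf.\ Ma--Marinescu); once invoked, both estimates reduce to elementary manipulations with orthonormal bases. The only subtle point is tracking the $\sqrt d$ rescaling factor arising in the derivative of the subleading term, which is precisely what pins down the estimate at the claimed order $O(d^{n-1})$.
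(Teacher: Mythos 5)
Your proposal is correct and follows essentially the same route as the paper: both parts reduce to the identity $\abs{s_x(x)}^2_{h^d}=B_d(x,x)$ and to $\nabla_1 B_d(x,x)=\nabla s_x(x)\otimes s_x^*(x)$ obtained by collapsing the Bergman sum over an orthonormal basis adapted to $\ker ev_x$, followed by the on-diagonal expansion and the bound $\abs{\nabla_1 B_d(x,x)}_{h^d}=O(d^{n-1/2})$. The only difference is cosmetic: where the paper cites this derivative bound directly from Ma--Marinescu, you re-derive it from the near-diagonal expansion in Bochner coordinates (correctly tracking the $\sqrt{d}$ rescaling), which is a legitimate unpacking of the same input.
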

\begin{proof}
Let $B_d(z_1,z_2)$ be the Bergman kernel associated with the Hermitian line bundle $(L^d,h^d)$, then we have $\abs{s_x(x)}^2_{h^d}=B_d(x,x)$ and $\abs{\nabla s_{x}(x)}^2_{h^d}=\frac{\abs{\nabla_1B_d(x,x)}_{h^d}^2}{B_d(x,x)}$, where $\nabla_1B_d(x,x)$ stands for the covariant derivative with respect to the first variable. To prove these identities, complete $\{s_x\}$ to a real orthonormal basis $\{s_x,s_1\dots,s_{N_d}\}$ of $\R H^0(X,L^d)$. Remark that $\{s_1\dots,s_{N_d}\}$ is an orthonormal basis of $\ker ev_x$. In particular, we have  $$B_d(x,x)=\abs{s_x(x)}^2_{h_d}+\sum_{i=1}^{N_d}\abs{s_i(x)}^2_{h_d}=\abs{s_x(x)}^2_{h_d},$$ where the first equality comes from the  definition of the Bergman kernel and the second follows from the fact that $s_i\in \ker ev_x$. The equality $\abs{s_x(x)}^2_{h^d}=\frac{d^n}{\pi^n}(1+O(d^{-1}))$ then follows from the fact $B_d(x,x)=\frac{d^n}{\pi^n}(1+O(d^{-1}))$, see, for example, \cite{ma2,zel}. 

For the equality $\abs{\nabla s_{x}(x)}^2_{h^d}=\frac{\abs{\nabla_1B_d(x,x)}_{h^d}^2}{B_d(x,x)}$, we first  differentiate with respect to the first variable the identity  $B_d(z,w)=s_x(z)\otimes s^*_x(w)+\sum_{i=1}^{N_d}s_i(z)\otimes s^*_i(w)$ and then we evaluate at $(z,w)=(x,x)$ and we obtain $$\nabla_1B_d(x,x)=\nabla s_x(z)\otimes s^*_x(x)+\sum_{i=1}^{N_d}\nabla s_i(x)\otimes s^*_i(x)=\nabla s_x(z)\otimes s^*_x(x),$$ where the second equality follows again from the fact that  $s_i\in \ker ev_x$. We then obtain $\abs{\nabla s_x(x)}^2_{h^d}=\frac{\abs{\nabla_1B_d(x,x)}_{h^d}^2}{\abs{ s_x(x)}^2_{h^d}}$. The result then follows from the fact that $\abs{\nabla_1B_d(x,x)}_{h^d}=O(d^{n-1/2})$, see \cite[Theorem 4.2.1]{ma2}
\end{proof}
\begin{lemma}[Estimates of first order peak sections]\label{estimates of first peak sections} We have the following uniform estimates for any $x\in\R X$ and any $\{v_1,\dots,v_n\}$ orthonormal basis of $T_x\R X$:
 $$\abs{\nabla_{v_i}s_{v_i}(x)}^2_{h^d}=\frac{d^{n+1}}{\pi^n}(1+O(d^{-1}))\hspace{3mm} \textrm{and}\hspace{3mm} \abs{\nabla_{v_j}s_{v_i}(x)}^2_{h^d}=O(d^n)\hspace{2mm} \textrm{for}\hspace{2mm} i\neq j$$  as $d\rightarrow\infty$. Here, we have denoted by $n$  the dimension of $X$, by $\nabla$  the Chern connection of $(L^d,h^d)$ and by $\abs{\cdot}_{h^d}$  the norm induced by $h^d$.
\end{lemma}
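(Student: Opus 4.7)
The plan is to follow the strategy of the proof of Lemma \ref{estimates of peak sections} and express every quantity of interest as a derivative of the Bergman kernel restricted to $\R H_x$. First I fix a unit vector $e_x\in\R L^d_x$ and, for each $v\in T_x\R X$, I let $P_v\in\R H_x$ denote the $\mathcal{L}^2$-Riesz representer of the linear functional $s\mapsto(\nabla_v s(x))\cdot e_x^*$ on $\R H_x$. Then $(\ker ev_{2x,v})^\perp$ is the real line spanned by $P_v$, the non-orthonormalized section of Definition \ref{ort basis of peak} satisfies $\tilde s_{v_i}=\pm P_{v_i}/\norm{P_{v_i}}_{\mathcal{L}^2}$, and unravelling the definitions yields
$$|\nabla_{v_i}\tilde s_{v_i}(x)|^2_{h^d}=\norm{P_{v_i}}^2_{\mathcal{L}^2},\qquad |\nabla_{v_j}\tilde s_{v_i}(x)|^2_{h^d}=\frac{|\langle P_{v_i},P_{v_j}\rangle_{\mathcal{L}^2}|^2}{\norm{P_{v_i}}^2_{\mathcal{L}^2}}\quad(i\neq j),$$
so that everything reduces to computing $\norm{P_{v_i}}^2$ and $\langle P_{v_i},P_{v_j}\rangle$.

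To do so I complete $\{s_x\}$ to a real $\mathcal{L}^2$-orthonormal basis of $\R H^0(X,L^d)$ by adjoining an orthonormal basis $\{s_k\}$ of $\R H_x$ and use the decomposition $B_d(z,w)=s_x(z)\otimes s_x^*(w)+\sum_k s_k(z)\otimes s_k^*(w)$. Differentiating it once in $z$ along $v_i$ and once (anti-holomorphically) in $w$ along $v_j$, evaluating at $(x,x)$, and using $s_k(x)=0$, one obtains by the reproducing-kernel computation of Lemma \ref{estimates of peak sections}
$$\langle P_{v_i},P_{v_j}\rangle_{\mathcal{L}^2}\cdot (e_x\otimes\bar e_x)=\nabla_{1,v_i}\bar\nabla_{2,v_j}B_d(x,x)-\nabla_{v_i}s_x(x)\otimes\overline{\nabla_{v_j}s_x(x)},$$
where $\nabla_{1,v_i}\bar\nabla_{2,v_j}$ stands for the mixed second derivative in the two variables. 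I then invoke the second-order expansion of the Bergman kernel on the diagonal, which for an orthonormal basis $\{v_i\}$ of $T_x\R X$ reads $\nabla_{1,v_i}\bar\nabla_{2,v_j}B_d(x,x)=\frac{d^{n+1}}{\pi^n}\delta_{ij}+O(d^n)$ (see, e.g., \cite[Theorem 4.2.1]{ma2}). Combined with $|\nabla s_x(x)|^2_{h^d}=O(d^{n-1})$ from Lemma \ref{estimates of peak sections}, this gives $\norm{P_{v_i}}^2=\frac{d^{n+1}}{\pi^n}(1+O(d^{-1}))$ and $|\langle P_{v_i},P_{v_j}\rangle|=O(d^n)$ for $i\neq j$, whence the desired estimates for the non-orthonormalized sections $\tilde s_{v_i}$ follow immediately.

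Finally, the Gram--Schmidt step is negligible: the overlaps $|\langle\tilde s_{v_i},\tilde s_{v_j}\rangle|=|\langle P_{v_i},P_{v_j}\rangle|/(\norm{P_{v_i}}\norm{P_{v_j}})=O(d^{-1})$ being small, each orthonormalized section $s_{v_i}$ differs from $\tilde s_{v_i}$ by an $\mathcal{L}^2$-combination of $\{\tilde s_{v_k}\}_{k<i}$ with coefficients of size $O(d^{-1})$. Since each $\tilde s_{v_k}$ satisfies $|\nabla_{v_j}\tilde s_{v_k}(x)|_{h^d}=O(d^{(n+1)/2})$, the corresponding correction to $\nabla_{v_j}s_{v_i}(x)$ is of size $O(d^{(n-1)/2})$, which neither perturbs the leading-order diagonal estimate $d^{n+1}/\pi^n$ nor affects the off-diagonal $O(d^n)$ bound. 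The main technical obstacle is the second-order Bergman kernel asymptotic invoked above; this is standard (Tian's peak-section method and Ma--Marinescu), but is the genuine engine of the argument, the rest being linear-algebra manipulation built on top of it.
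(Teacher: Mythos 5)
Your proof is correct and follows essentially the same route as the paper: the paper also reduces the estimates for the non-orthonormalized sections $\tilde s_{v_i}$ to derivatives of the Bergman kernel (deferring the computation to ``following the lines of'' Lemma~\ref{estimates of peak sections}) and then absorbs the Gram--Schmidt corrections using the $O(d^{-1})$ overlaps. The only cosmetic difference is that the paper quotes Tian's Lemma~3.1 for $\langle\tilde s_{v_i},\tilde s_{v_j}\rangle_{\mathcal{L}^2}=O(d^{-1})$, whereas you rederive this from the mixed second-order diagonal asymptotics of $B_d$; your write-up in fact makes explicit the reproducing-kernel identity that the paper leaves implicit.
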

\begin{proof}
Let $\tilde{s}_{v_i}$ be a first order peak sections at $x$ associated with the tangent vector $v_i$ (see Definition \ref{ort basis of peak}). Following the lines of the proof of Lemma \ref{estimates of peak sections}, we obtain that  $\abs{\nabla_{v_i}\tilde{s}_{v_i}(x)}^2_{h^d}=\frac{d^{n+1}}{\pi^n}(1+O(d^{-1}))$ and $\abs{\nabla_{v_j}\tilde{s}_{v_i}(x)}^2_{h^d}=O(d^n)$ for $i\neq j$

Remark that   $s_{v_1}=\tilde{s}_{v_1}$ and so we already have the result for $s_{v_1}$. Let us suppose that we have the proved the desired estimates for the sections $s_{v_1},\dots,s_{v_{n-1}}$ and let us prove them for $s_{v_n}$.  By construction, the section $s_{v_n}$ is constructed from the sections $\{s_{v_1},\dots,s_{v_{n-1}}\}$ as follows:
\begin{equation}\label{gramschmidtprocess}
 s_{v_n}=\frac{\tilde{s}_{v_n}-\displaystyle\sum_{i=1}^{n-1}\langle\tilde{s}_{v_n},s_{v_i}\rangle_{\mathcal{L}^2}s_{v_i}}{\norm{\tilde{s}_{v_n}-\displaystyle\sum_{i=1}^{n-1}\langle\tilde{s}_{v_n},s_{v_i}\rangle_{\mathcal{L}^2}s_{v_i}}_{\mathcal{L}^2}}.
 \end{equation}
 By \cite[Lemma 3.1]{tian}, the $\mathcal{L}^2$-scalar product between $\tilde{s}_{v_i}$ and $\tilde{s}_{v_j}$ is $O(d^{-1})$ for any $i\neq j$. Following the Gram-Schmidt process, this implies  $\langle\tilde{s}_{v_n},s_{v_i}\rangle_{\mathcal{L}^2}=O(d^{-1})$. In particular, we obtain $\norm{\tilde{s}_{v_n}-\displaystyle\sum_{i=1}^{n-1}\langle\tilde{s}_{v_n},s_{v_i}\rangle_{\mathcal{L}^2}s_{v_i}}_{\mathcal{L}^2}=1+O(d^{-1})$ which gives us 
 \begin{equation}\label{gram2}
 \eqref{gramschmidtprocess}=\bigg(\tilde{s}_{v_n}-\displaystyle\sum_{i=1}^{n-1}a_is_{v_i}\bigg)(1+O(d^{-1}))
 \end{equation}
 where $a_i=O(d^{-1})$. From \eqref{gram2}, the induction hypothesis and the properties of $\tilde{s}_{v_n}$, we obtain $\abs{\nabla_{v_n}s_{v_n}(x)}^2_{h^d}=\frac{d^{n+1}}{\pi^n}(1+O(d^{-1}))$ and  $\abs{\nabla_{v_i}s_{v_n}(x)}^2_{h^d}=O(d^n)$ for $i\neq n$, hence the result.
\end{proof}
\subsection{Distance to the real discriminant}\label{sec distance}
Let us now consider $m$ real ample line bundles $L_1,\dots,L_m$ on $X$. We equip each line bundle
 $L_i$ with a real Hermitian metric $h_i$ with positive curvature. This induces a $\mathcal{L}^2$-scalar product on $\R H^0(X,\oplus_{i=1}^mL_i^d)$  defined by Equation \eqref{l2 scalar}. Given $s=(s_1,\dots,s_m)\in \R H^0(X,\oplus_{i=1}^mL_i^d)$, the next lemma estimates the $\mathcal{L}^2$-distance  from $s$ to the real discriminant $\Sigma_d$, see Definition \ref{realdiscriminant}. This distance is computed with respect to the $\mathcal{L}^2$-scalar product, that is   $$\mathrm{dist}_{\Sigma_d}(s):=\min_{s'\in \Sigma_d}\norm{s-s'}_{\mathcal{L}^2}.$$
The estimate of $\mathrm{dist}_{\Sigma_d}(s)$ in Lemma \ref{distance to the discriminant} is given in term of the distance induced by the $\mathcal{C}^{1}(\R X)$-norm.
\begin{defn}[$\mathcal{C}^1(\R X)$-norm]\label{defC1} We define the $\mathcal{C}^1(\R X)$-norm of a real holomorphic global section $s=(s_1,\dots,s_m)$ of $L_1^d\oplus\dots\oplus L_m^d$ to be $$\norm{s}_{\mathcal{C}^1(\R X)}=\max_{x\in \R X}\big(\abs{s(x)}^2_{h_d}+\abs{\nabla s(x)}^2_{h_d}\big)^{1/2}.$$ Here we use the following notations:
\begin{itemize}
 \item $\abs{s(x)}^2_{h_d}=\sum_{i=1}^m\abs{s_i(x)}^2_{h_i^d}$ is the norm induced by the Hermitian metrics $h_i^d$ on $L_i^d$, $i\in\{1,\dots,m\}$.
 \item   $\abs{\nabla s(x)}_{h_d}^2=\displaystyle\sum_{i=1}^m\sum_{j=1}^n\abs{\nabla_{v^{(i)}_j}s(x)}_{h^d}^2$, where $\nabla$ is the Chern connection of $(L_i,h_i)$ and where $\big\{v^{(i)}_1,\dots, v^{(i)}_n\big\}$ is an orthonormal basis of $T_x\R X$ with respect to the Riemannian metric induced by the curvature form of $(L_i,h_i)$.
 \end{itemize}
\end{defn}
\begin{lemma}[Distance to the discriminant]\label{distance to the discriminant} Let $(L_1,c_{L_1}),\dots,(L_m,c_{L_m})$ be  real  ample line bundles over a real algebraic variety $(X,c_X)$ of dimension $n$.  Then, there exists  $d_0\in\mathbb{N}$ such that, for any $d\geq d_0$ and any $s\in \R H^0(X,\oplus_{i=1}^mL_i^d)$, we have  $$\mathrm{dist}_{\Sigma_d}(s)\leq \mathrm{dist}_{\mathcal{C}^{1}(\R X)}(s,\Sigma_d).$$
\end{lemma}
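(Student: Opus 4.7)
\emph{Plan.} I will construct, for every $s'\in\Sigma_d$ nearly realizing $\delta:=\mathrm{dist}_{\mathcal{C}^1(\R X)}(s,\Sigma_d)$, an $\mathcal{L}^2$-small perturbation $\tilde h$ of $s$ such that $s+\tilde h$ has the same $1$-jet as $s'$ at the bad point and therefore lies in $\Sigma_d$. The building blocks will be the peak and first-order peak sections of Section~\ref{sec eval}, whose tight pointwise concentration against unit $\mathcal{L}^2$-norm is the crux of the estimate. Passing to the infimum over $s'$ then yields $\mathrm{dist}_{\Sigma_d}(s)\leq \delta$.

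By definition of $\Sigma_d$, there is a point $x\in\R X$ where $s'$ fails to vanish transversally: $s'_i(x)=0$ for every $i$ and the combined differential $(ds'_1(x),\dots,ds'_m(x))$ has rank strictly less than $m$. Setting $h:=s-s'$, the $\mathcal{C}^1(\R X)$-hypothesis gives $|h_i(x)|_{h_i^d}\leq\delta$ and $|\nabla_v h_i(x)|_{h_i^d}\leq\delta$ for every $i$ and every unit vector $v\in T_x\R X$. For each $i$, pick real peak and first-order peak sections $s_{x,i},s_{v^{(i)}_1,i},\dots,s_{v^{(i)}_n,i}$ of $L_i^d$ at $x$ (with orthonormal basis taken in the metric induced by $\omega_i$), viewed as sections of $\oplus_j L_j^d$ supported in the $i$-th summand, and set
\[
\tilde h_i := \alpha_i\,s_{x,i} + \sum_{j=1}^{n} \beta_{ij}\,s_{v^{(i)}_j,i},
\]
with the real scalars $\alpha_i,\beta_{ij}$ fixed by the $m(n+1)$ linear conditions $(s_i+\tilde h_i)(x)=s'_i(x)$ and $\nabla_{v^{(i)}_k}(s_i+\tilde h_i)(x)=\nabla_{v^{(i)}_k}s'_i(x)$. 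These force $s+\tilde h$ to have the same $1$-jet at $x$ as $s'$, hence $s+\tilde h\in\Sigma_d$.

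Since every first-order peak section vanishes at $x$ by construction, the value constraints decouple and give $\alpha_i=-h_i(x)/s_{x,i}(x)$, hence $|\alpha_i|=O(\delta\,d^{-n/2})$ by Lemma~\ref{estimates of peak sections}. Substituting into the derivative constraints, Lemma~\ref{estimates of first peak sections} shows that each $n\times n$ block matrix $[\nabla_{v^{(i)}_k}s_{v^{(i)}_j,i}(x)]_{k,j}$ is diagonally dominant (diagonal entries of order $d^{(n+1)/2}$, off-diagonal of order $d^{n/2}$), while the remainder terms $\alpha_i\nabla_{v^{(i)}_k}s_{x,i}(x)=O(\delta\,d^{-1/2})$ are negligible against the right-hand side $\nabla_{v^{(i)}_k}h_i(x)=O(\delta)$. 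Inversion yields $|\beta_{ij}|=O(\delta\,d^{-(n+1)/2})$. Orthonormality of the peak sections (with Gram--Schmidt corrections of order $d^{-1}$) then gives $\|\tilde h\|_{\mathcal{L}^2}^2=O(\delta^2\,d^{-n})$, which is $\leq\delta^2$ for $d\geq d_0$ sufficiently large. Consequently $\mathrm{dist}_{\Sigma_d}(s)\leq\|\tilde h\|_{\mathcal{L}^2}\leq\delta$, and letting $s'$ minimize over $\Sigma_d$ finishes the proof.

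The main obstacle is the invertibility of the derivative system, which rests entirely on the diagonal-dominance provided by Lemma~\ref{estimates of first peak sections}. A secondary subtlety compared to the hypersurface case ($m=1$) treated in \cite{anc5} is that one has to work simultaneously with $m$ possibly distinct Riemannian metrics on $T_x\R X$, one for each curvature form $\omega_i$, when building the first-order peak sections; however these summands do not interact, since peak sections of different $L_i^d$ are $\mathcal{L}^2$-orthogonal in $\R H^0(X,\oplus_j L_j^d)$.
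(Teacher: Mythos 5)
Your proof is correct and rests on the same mechanism as the paper's: working in the peak-section/first-order-peak-section coordinates at the bad point $x$, the $1$-jet evaluation rescales coefficients by $d^{n/2}$ and $d^{(n+1)/2}$ (Lemmas \ref{estimates of peak sections} and \ref{estimates of first peak sections}), so an $\mathcal{O}(\delta)$ correction of the jet costs only $\mathcal{O}(\delta d^{-n/2})$ in $\mathcal{L}^2$-norm. The paper organizes this as a comparison of two minimization problems over $\Sigma_{d,x}$ rather than an explicit perturbation matching the jet of a near-minimizer, but the substance and the required uniform Bergman-kernel estimates are identical.
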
 
\begin{proof}
For any $x\in\R X$, let us denote by $\Sigma_{d,x}$ the space of sections $s\in  \R H^0(X,\oplus_{i=1}^mL^d_i)$ that do not vanishing transversally at $x$, that is 
$$\Sigma_{d,x}:=\big\{s\in\R H^0(X,\oplus_{i=1}^mL^d_i), s(x)=0\hspace*{1.5mm} \textrm{and}\hspace*{1.5mm}  \nabla s(x) \hspace*{1.5mm}  \textrm{is not surjective} \big\}.$$
We have $\bigcup_{x\in \R X}\Sigma_{d,x}=\Sigma_{d}$, so that  
\begin{equation}\label{minL2}
\mathrm{dist}_{\Sigma_d}(s)=\min_{x\in \R X}\min_{s'\in\Sigma_{d,x}}\norm{s-s'}_{\mathcal{L}^2}. 
\end{equation}
Let us consider the real $1$-jets space of $\oplus_{i=1}^mL^d$ at $x$ $$J_x^1\big(\oplus_{i=1}^m\R L^d_i\big):=\bigoplus_{i=1}^m(\R L^d_i)_x\oplus\bigoplus_{i=1}^m(T^*\R X\otimes\R L^d_i)_x$$ and define $W_x\subset J_x^1\big(\bigoplus_{i=1}^m\R L^d_i\big)$ to be the image of $\Sigma_{d,x}$ under the $1$-jet map $$j^1_x:\R H^0(X,\oplus_{i=1}^m L^d_i)\rightarrow J_x^1\big(\bigoplus_{i=1}^m\R L^d_i\big)$$ which maps $s$ to $j^1_x(s):=(s(x),\nabla s(x))$.

 The vector bundle $J^1\big(\bigoplus_{i=1}^m\R L^d_i\big)$ over $\R X$ is naturally equipped with a metric (induced by the metric $h_d=h^d_1\oplus\dots\oplus h^d_m$ on $\oplus_{i=1}^mL^d_i$), and, with respect to this metric,  the distance in the fiber $J_x^1\big(\bigoplus_{i=1}^m\R L^d_i\big)$ between $j^1_x(s)$ and $W_x$  equals $$\mathrm{dist}_{h_d}(j^1_x(s),W_x):=\min_{s'\in \R \Delta_{d,x}}\big(\abs{s(x)-s'(x)}^2_{h_d}+\abs{\nabla s(x)-\nabla s'(x)}^2_{h_d}\big)^{1/2}.$$  Remark that we have the following inequality 
 \begin{multline*}\mathrm{dist}_{h_d}(j^1_x(s),W_x)  \leq \min_{s'\in  \Sigma_{d,x}}\max_{y\in \R X}\big(\abs{s(y)-s'(y)}^2_{h_d}+\abs{\nabla s(y)-\nabla s'(y)}^2_{h_d}\big)^{1/2}
\\ =:\mathrm{dist}_{\mathcal{C}^1(\R X)}(s,\Sigma_{d,x}),
 \end{multline*} 
 which implies $$\min_{x\in \R X}\mathrm{dist}_{h_d}(j^1_x(s),W_x)\leq \mathrm{dist}_{\mathcal{C}^1(\R X)}(s,\Sigma_{d}).$$
In particular, the last inequality says that, in order to prove the lemma, it is enough to prove the inequality $\mathrm{dist}_{\Sigma_d}(s)\leq \min_{x\in \R X}\mathrm{dist}_{h_d}(j^1_x(s),W_x)$ which, in turn, is implied by the inequality 
\begin{equation}\label{ineqOfDistance}
\mathrm{dist}_{\Sigma_{d,x}}(s)\leq \mathrm{dist}_{h_d}(j^1_x(s),W_x), \hspace{1mm}\textrm{for any}\hspace{1mm} x\in \R X.
\end{equation}
In the remaining part of the proof, we will prove Equation \eqref{ineqOfDistance}. For any $i\in\{1,\dots, m\}$, let $\{v_1^{(i)},\dots,v_m^{(i)}\}$ be an orthonormal basis of $T_x\R X$ with the respect to the Riemannian metric induced by the (positive) curvature form of the Hermitian metric $h_i$.

For any $i\in\{1,\dots, m\}$ let $s_0^{(i)}\in  \R H^0(X,L_i^d)$ be a peak section at $x$ (see Definition \ref{peak}). Similarly, let $s_{1}^{(i)},\dots, s_{n}^{(i)}\in \R H^0(X,L_i^d)$ be the orthonormal family of the first order peak sections at $x$ associated with the basis $\{v_1^{(i)}\dots,v_m^{(i)}\}$ (see Definition \ref{ort basis of peak}). We can then write
\begin{equation}\label{basisofs}
s=\big(\sum_{j=0}^na_j^{(1)}s_j^{(1)},\dots,\sum_{j=0}^na_j^{(m)}s_j^{(m)}\big)+ \big(\tau_1\dots, \tau_m\big)
\end{equation}
where $\tau_i(x)=\nabla\tau_i(x)=0$ for any $i\in\{1,\dots,m\}$.
Similarly,   for any section $s'\in \R H^0(X,\oplus_{i=1}^mL_i^d)$, we can write
\begin{equation}\label{basisofs1}
s'=\big(\sum_{j=0}^nb_j^{(1)}s_j^{(1)},\dots,\sum_{j=0}^nb_j^{(m)}s_j^{(m)}\big)+ \big(\tau'_1\dots, \tau'_m\big)
\end{equation}
where $\tau'_i(x)=\nabla\tau'_i(x)=0$ for any $i\in\{1,\dots,m\}$. With this notations, we have
\begin{equation}\label{diffOfL2}
\norm{s-s'}^2_{\mathcal{L}^2}=\sum_{j=0}^n\sum_{i=1}^m\big(a_j^{(i)}-b_j^{(i)}\big)^2+\sum_{i=1}^m\norm{\tau_i-\tau'_i}^2_{\mathcal{L}^2}.
\end{equation}
Now, if $s'\in\Sigma_{d,x}$, then we have $b_0^{(i)}=0$ for any $i\in\{1,\dots,m\}$ and the $n\times m$ real matrix $(b_j^{(i)})_{i,j}$ has rank smaller or equal than $m-1$ (this reflects the condition "$\nabla s'(x)$ is not surjective"). Moreover, as we want to minimize the $\mathcal{L}^2$-distance between $s$ and $s'$,   we can choose $\tau_i'=\tau_i$, for any $i\in\{1,\dots,m\}$ (indeed, remark that the section $\tau'=(\tau_1',\dots,\tau_m')$ vanishes at $x$ with order at least $2$, in particular for any such $\tau'$ and any $s'\in\Sigma_{d,x}$, we have $s'+\tau'\in\Sigma_{d,x}$). This implies that 
\begin{equation}\label{minOfL2}
\mathrm{dist}^2_{\Sigma_{d,x}}(s)=\min_{s'\in\Sigma_{d,x}}\norm{s-s'}^2_{\mathcal{L}^2}=\sum_{i=1}^m\big(a_0^{(i)}\big)^2+\min_{(b_j^{(i)})_{i,j}\in \textrm{Sing}_{n\times m}}\sum_{j=1}^n\sum_{i=1}^m\big(a_j^{(i)}-b_j^{(i)}\big)^2
\end{equation}
where $\textrm{Sing}_{n\times m}$ denotes the space of $n\times m$ real matrices of rank smaller or equal than $m-1$.

Let us now compute $\mathrm{dist}^2_{h_d}(j^1_x(s),W_x)$. We keep the notations \eqref{basisofs} and \eqref{basisofs1}. By Lemmas \ref{estimates of peak sections} and \ref{estimates of first peak sections},  we have, for any $s'\in\R H^0(X,\oplus_{i=1}^mL_i^d)$, 
\begin{equation}\label{diffOfC1}
\abs{j^1_x(s)-j^1_x(s')}^2_{h_d}=\sum_{i=1}^m\big(a_0^{(i)}-b_0^{(i)}\big)^2\frac{d^n}{\pi^n}\big(1+O(d^{-1})\big)+\sum_{j=1}^n\sum_{i=1}^m\big(a_j^{(i)}-b_j^{(i)}\big)^2\frac{d^{n+1}}{\pi^n}\big(1+O(d^{-1})\big).
\end{equation}
Now, if $s'\in\Sigma_{d,x}$, we obtain that $b_0^{(i)}=0$ for any $i\in\{1,\dots,m\}$ and the $n\times m$ real matrix $(b_j^{(i)})_{i,j}$ lies in $\textrm{Sing}_{n\times m}$. In particular, by Equation \eqref{diffOfC1}, we obtain 
\begin{multline}\label{minOfC1}
\textrm{dist}^2_{h_d}(j^1_x(s),W_x)=\min_{s'\in\Sigma_{d,x}}\abs{j^1_x(s)-j^1_x(s')}^2_{h_d}\\
=\sum_{i=1}^m\big(a_0^{(i)}\big)^2\big(\frac{d^n}{\pi^n}+O(d^{n-1})\big)+\min_{(b_j^{(i)})_{i,j}\in \textrm{Sing}_{n\times m}}\sum_{j=1}^n\sum_{i=1}^m\big(a_j^{(i)}-b_j^{(i)}\big)^2\big(\frac{d^{n+1}}{\pi^n}+O(d^{n})\big)\\
=d^n\bigg(\sum_{i=1}^m\big(a_0^{(i)}\big)^2+\min_{(b_j^{(i)})_{i,j}\in \textrm{Sing}_{n\times m}}\sum_{j=1}^n\sum_{i=1}^md\big(a_j^{(i)}-b_j^{(i)}\big)^2\bigg)\big(\pi^{-n}+O(d^{-1})\big).
\end{multline}
In particular, for $d$ large enough, the quantity appearing in Equation \eqref{minOfC1} is bigger than the one in Equation \eqref{minOfL2}. This is exactly the inequality \eqref{ineqOfDistance}, which proves the result.
\end{proof}
\section{An orthogonal decomposition for real global sections}\label{secorthogonaldecomposition}
In this section we define an orthogonal decomposition of the space of real holomorphic sections of $\oplus_{i=1}^m{L^d_i}$ (see Notation \ref{orthogonal component}) which will be a key ingredient for the proof of Theorem \ref{theorem approximation}. In order to do this, first recall  the following result from \cite{anc5}.
\begin{prop}\cite[Proposition 2.1]{anc5}\label{existance of sigma} Let $L$ be an ample real holomorphic line bundle over a real algebraic variety $X$. There exists an even positive integer $k_0$ such that for any even $k\geq k_0$ there exists a real  section $\sigma$ of $L^{k}$ with the following properties: (i)  $\sigma$ vanishes transversally and (ii) $\R Z_{\sigma}$ is empty.
\end{prop}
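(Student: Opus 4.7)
The proposition asserts that for an ample real holomorphic line bundle $L$ on a real algebraic variety $X$, some sufficiently large even tensor power $L^{k}$ admits a real section vanishing transversally whose real zero locus is empty. The plan is to build such a $\sigma$ explicitly as a ``sum of squares'' of real sections of $L^{k/2}$, and then perturb it by Bertini to achieve transversality.

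\textbf{Reduction and case $\R X=\emptyset$.} If $\R X=\emptyset$, then any transverse real section works, which exists for $k$ large by Bertini applied to the real linear system $|L^k|_{\R}$; this is guaranteed once $L^k$ is very ample. So I will assume $\R X\neq\emptyset$, and take $k_0$ large enough that, for every even $k\geq k_0$, the line bundle $L^{k/2}$ is base-point free (in particular at every point of $\R X$).

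\textbf{Construction of $\sigma_0$.} Fix a real basis $s_0,\dots,s_N$ of $\R H^0(X,L^{k/2})$ (such bases exist since the real structure on $L^{k/2}$ induces a real structure on $H^0(X,L^{k/2})$). Set
\[
\sigma_0 \;:=\; \sum_{i=0}^{N} s_i\otimes s_i \;\in\; H^0(X,L^{k/2}\otimes L^{k/2})\;=\;H^0(X,L^k).
\]
Each summand $s_i\otimes s_i$ is a real section of $L^k$, so $\sigma_0\in\R H^0(X,L^k)$.

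\textbf{Positivity on $\R X$.} The essential point is that the square of a real line bundle is canonically oriented: for a real line bundle $\ell$ over any base, the fiber $\ell_y\otimes\ell_y$ carries a distinguished positive ray, namely the set of tensors $v\otimes v$ with $v\in\ell_y$ (this is well-defined because $(\lambda v)\otimes(\lambda v)=\lambda^2\,v\otimes v$ and $\lambda^2\geq 0$). Applying this to $\ell=\R L^{k/2}|_{\R X}$ gives a canonical positive orientation on $\R L^{k}|_{\R X}$. For any $x\in\R X$, the real numbers $s_0(x),\dots,s_N(x)\in\R L^{k/2}_x$ are not all zero (since $L^{k/2}$ is base-point free on $\R X$), and $\sigma_0(x)=\sum_i s_i(x)\otimes s_i(x)$ is a nonzero positive combination in $\R L^k_x$. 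In particular $\sigma_0$ has no real zero.

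\textbf{Transversality by small perturbation.} In general, $\sigma_0$ need not vanish transversally on $X$, but the discriminant $\Delta_d\subset H^0(X,L^k)$ is a proper algebraic hypersurface for $k_0$ large (Bertini), so its real part $\R\Delta_d$ is a proper real algebraic hypersurface in $\R H^0(X,L^k)$. Hence transverse real sections are dense in every nonempty open subset of $\R H^0(X,L^k)$. Choosing a neighborhood $U$ of $\sigma_0$ in $\R H^0(X,L^k)$ small enough in $\mathcal{C}^0$-norm, the positivity $s(x)>0$ on the compact set $\R X$ is preserved for every $s\in U$. Picking $\sigma\in U\setminus \R\Delta_d$ then yields a real section that vanishes transversally on $X$ and satisfies $\R Z_\sigma=\emptyset$, as required.

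\textbf{Main obstacle.} There is no serious analytic obstacle: the whole argument is a linear-algebraic ``sum of squares'' trick combined with Bertini. The only subtle point is the canonical orientation of the square of a real line bundle, which is what forces the positivity of $\sigma_0$ on $\R X$ and is the reason the statement requires $k$ to be \emph{even}.
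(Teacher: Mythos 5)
Your argument is correct, and it is essentially the proof of the cited reference: the paper itself does not reprove this statement but imports it from \cite[Proposition 2.1]{anc5}, where the construction is the same sum-of-squares section $\sum_i s_i\otimes s_i$ of $L^k=(L^{k/2})^{\otimes 2}$ (positive on $\R X$ by the canonical orientation of the square of a real line bundle, which is exactly why $k$ must be even), followed by a small real perturbation off the discriminant to gain transversality. The only points worth stating explicitly are that a real basis of $\R H^0(X,L^{k/2})$ spans $H^0(X,L^{k/2})$ over $\C$, so base-point freeness forces the $s_i(x)$ not to vanish simultaneously at $x\in\R X$, and that $\R X$ is compact, so the positivity survives the $\mathcal{C}^0$-small perturbation; you address both.
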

\begin{conv}\label{notation of sigma} For any $i\in\{1,\dots,m\}$ and any  even integer $k$ large enough, we  denote by $\sigma_i$ a real global section of $L_i^{k}$ satisfying the properties of Proposition \ref{existance of sigma}.
\end{conv}
 \begin{defn}\label{vanishing order} Let $\sigma_i\in  \R H^0(X,L_i^k)$ be a section given by Notation \ref{notation of sigma}, for some fixed even integer $k$ large enough and denote by $\sigma=(\sigma_1,\dots,\sigma_m)\in  \R H^0(X,\oplus_{i=1}^mL_i^k)$.
 For any pair of integers $d$ and $\ell$,  we define the subspace $\R H_{d,\sigma^\ell}$ of $\R H^0(X,\oplus_{i=1}^mL_i^d)$ to be  the space of sections $s=(s_1,\dots,s_m)\in  \R H^0(X,\oplus_{i=1}^mL_i^d)$ such that $s_i=\sigma_i^{\ell}\otimes s_i'$, for some $s_i'\in  \R H^0(X,L_i^{d-k\ell})$.
\end{defn}

\begin{conv}[Orthogonal decomposition]\label{orthogonal component}  
 For any real section $s\in \R H^0(X,\oplus_{i=1}^mL_i^d)$ there exists an unique  orthogonal decomposition $s=s_{\sigma^{\ell}}^{\perp}+s_{\sigma^{\ell}}^{0}$ with $s_{\sigma^{\ell}}^{0}\in \R H_{d,\sigma^\ell}$ and $s_{\sigma^{\ell}}^{\perp}\in \R H^{\perp}_{d,\sigma^\ell}$. (Here,  $\R H_{d,\sigma^\ell}$ is as in Definition \ref{vanishing order} and  its orthogonal is with respect to the $\mathcal{L}^2$-scalar product defined in Equation \eqref{l2 scalar}.) 
\end{conv}

\begin{prop}\label{partialbergman} There exists a positive real number $t_0$ such that, for any $t\in(0, t_0)$,  we have the uniform estimate $\norm{\tau}_{\mathcal{C}^1(\R X)}=O(d^{-\infty})$ for any real section $\tau\in\R H_{d,\sigma^{\lfloor t d\rfloor}}^{\perp}$ with $\norm{\tau}_{\mathcal{L}^2}=1$, 
as $d\rightarrow\infty$. (Here, $\R H_{d,\sigma^{\lfloor t d\rfloor}}$ is as in Definition \ref{vanishing order} and  $\lfloor t d \rfloor$ is the greatest integer less than or equal to $t d$.)
\end{prop}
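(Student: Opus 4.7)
The plan is to reduce the proposition to the single-line-bundle partial Bergman kernel estimate (Proposition~\ref{logbergman}), exploiting that both $\R H^0(X,\oplus_{i=1}^m L_i^d)$ and the subspace $\R H_{d,\sigma^\ell}$ split as orthogonal direct sums across the $m$ factors. With $\ell=\lfloor td\rfloor$, the $\mathcal{L}^2$-product \eqref{l2 scalar} is the direct sum of the $\mathcal{L}^2$-products on each $\R H^0(X,L_i^d)$, and $\R H_{d,\sigma^\ell}=\bigoplus_{i=1}^m \sigma_i^{\ell}\otimes \R H^0(X,L_i^{d-k\ell})$. Hence a unit-norm $\tau=(\tau_1,\dots,\tau_m)\in \R H_{d,\sigma^\ell}^{\perp}$ is an $m$-tuple in which each $\tau_i$ is $\mathcal{L}^2$-orthogonal to $\sigma_i^{\ell}\otimes\R H^0(X,L_i^{d-k\ell})$, with $\sum_i\norm{\tau_i}_{\mathcal{L}^2}^2=1$. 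Since $\norm{\tau}_{\mathcal{C}^1(\R X)}\leq C\max_i\norm{\tau_i}_{\mathcal{C}^1(\R X)}$ by Definition~\ref{defC1}, it is enough to prove the result for each factor separately: any unit-norm real $\tau\in\R H^0(X,L^d)$ orthogonal to $\sigma^{\ell}\otimes\R H^0(X,L^{d-k\ell})$ satisfies $\norm{\tau}_{\mathcal{C}^1(\R X)}=O(d^{-\infty})$.

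Next, I would control such a single $\tau$ via a partial Bergman kernel bound. Let $B_d$ be the Bergman kernel of $(L^d,h^d)$ and $B_d^{\parallel}$ the partial Bergman kernel of the subspace $\sigma^{\ell}\otimes H^0(X,L^{d-k\ell})$; set $B_d^{\perp}:=B_d-B_d^{\parallel}$. By the reproducing property, for any $x\in X$,
\[
|\tau(x)|_{h^d}^2\leq B_d^{\perp}(x,x),
\]
and an entirely analogous inequality bounds $|\nabla\tau(x)|_{h^d}^2$ by a suitable diagonal mixed covariant derivative of $B_d^{\perp}$. Thus the desired $\mathcal{C}^1(\R X)$-bound follows once $B_d^{\perp}(x,x)$ and its diagonal covariant derivatives are shown to be $O(d^{-\infty})$ uniformly for $x\in\R X$.

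This last estimate is the content of the logarithmic Bergman kernel statement, Proposition~\ref{logbergman}, which I would invoke directly. It says that for $t<t_0$ sufficiently small the partial kernel $B_d^{\parallel}(x,x)$ coincides with the full kernel $B_d(x,x)$ up to an $O(d^{-\infty})$ error at any point whose distance to $Z_{\sigma}$ is bounded below, together with the analogous statement for covariant derivatives. Since $\R Z_{\sigma}=\emptyset$ by Notation~\ref{notation of sigma} and $\R X$ is compact, $\R X$ sits at positive distance from $Z_{\sigma}$; taking the difference yields $B_d^{\perp}(x,x)=O(d^{-\infty})$ uniformly for $x$ in a neighbourhood of $\R X$, which ends the argument.

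The main obstacle is therefore entirely concentrated in Proposition~\ref{logbergman}. This is where the smallness of $t_0$ matters: one needs $tk<1$ so that $L^{d-k\ell}$ remains ample for $\ell=\lfloor td\rfloor$, and the logarithmic Bergman kernel analysis further restricts $t$ so that the ``forbidden region'' of $B_d^{\parallel}$, governed by the singular weight $|\sigma|^{2\ell}h^d$, is contained in a tubular neighbourhood of $Z_\sigma$ disjoint from $\R X$. Once that input is granted, the two preceding reductions---decoupling across the direct sum and applying the Bergman reproducing inequality---are routine, and the proposition follows.
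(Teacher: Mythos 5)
Your reduction is essentially the paper's proof: the paper likewise splits $\tau=(\tau_1,\dots,\tau_m)$ across the orthogonal direct sum, notes $\norm{\tau_i}_{\mathcal{L}^2}\le 1$, applies the single-line-bundle estimate to each factor, and recombines via $\norm{\tau}_{\mathcal{C}^1(\R X)}\leq \big(\sum_{i=1}^m\norm{\tau_i}^2_{\mathcal{C}^1(\R X)}\big)^{1/2}$; your partial-Bergman-kernel sketch of the single-factor case is also the correct mechanism. The one inaccuracy is your citation: the key input is not Proposition~\ref{logbergman} of this paper --- which is the \emph{fixed-power} ($\ell=1$) exponential variant, stated downstream and proved by the same machinery, not a statement that $B_d^{\parallel}$ and $B_d$ agree on $\R X$ --- but rather the external result \cite[Proposition 2.6]{anc5}, which is exactly the single-line-bundle version of the present proposition for $\ell=\lfloor td\rfloor$. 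With that reference corrected, your argument matches the paper's.
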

\begin{proof}
Let $\tau=(\tau_1,\dots,\tau_m)\in \R H_{d,\sigma^{\lfloor t d\rfloor}}^{\perp}$ be such that $\norm{\tau}_{\mathcal{L}^2}=1$. This implies in particular that $\norm{\tau_i}_{\mathcal{L}^2}\leq 1$ for any $i\in\{1,\dots,m\}$. By \cite[Proposition 2.6]{anc5}, we have the uniform estimate $\norm{\tau}_{\mathcal{C}^1(\R X)}=O(d^{-\infty})$. The result then follows from $\norm{\tau}_{\mathcal{C}^1(\R X)}\leq \big(\sum_{i=1}^m\norm{\tau_i}^2_{\mathcal{C}^1(\R X)}\big)^{1/2}$
\end{proof}
\begin{prop}\label{logbergman} Let $k$  be an integer large enough. There exists $c>0$ (depending on $k$) such that we have the uniform estimate $\norm{\tau}_{\mathcal{C}^1(\R X)}\leq O(e^{-c\sqrt{d}\log d})$ for any real section $\tau\in \R H_{d,\sigma}^{\perp}$ with $\norm{\tau}_{\mathcal{L}^2}=1$, as $d\rightarrow\infty$.  If the real Hermitian metrics $h_i$ on $L_i$ are  analytic, then  we have the uniform estimate $\norm{\tau}_{\mathcal{C}^1(\R X)}\leq O(e^{-c d})$ for any real section $\tau\in \R H_{d,\sigma}^{\perp}$ with $\norm{\tau}_{\mathcal{L}^2}=1$.
\end{prop}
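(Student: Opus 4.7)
The plan is to reduce this statement to the single--line--bundle situation treated in \cite{anc5}, exactly as was done for Proposition \ref{partialbergman}, and then to invoke the sharper logarithmic Bergman kernel asymptotics that govern the decay when the divisor $Z_\sigma$ is fixed (rather than growing with $d$).

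First I would exploit the product structure of the decomposition. Since the $\mathcal{L}^2$--inner product \eqref{l2 scalar} is a direct sum of the $\mathcal{L}^2$--inner products on the spaces $\R H^0(X,L_i^d)$, and since $\R H_{d,\sigma}=\bigoplus_{i=1}^m \sigma_i\otimes\R H^0(X,L_i^{d-k})$, we obtain
\begin{equation*}
\R H_{d,\sigma}^\perp \;=\; \bigoplus_{i=1}^m \bigl(\sigma_i\otimes\R H^0(X,L_i^{d-k})\bigr)^\perp.
\end{equation*}
Consequently, if $\tau=(\tau_1,\dots,\tau_m)\in\R H_{d,\sigma}^\perp$ with $\norm{\tau}_{\mathcal{L}^2}=1$, each $\tau_i\in\R H^0(X,L_i^d)$ is $\mathcal{L}^2$--orthogonal to $\sigma_i\otimes\R H^0(X,L_i^{d-k})$ and satisfies $\norm{\tau_i}_{\mathcal{L}^2}\leq 1$.

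Next I would invoke the logarithmic Bergman kernel estimate from \cite{anc5} applied to each component $\tau_i$: under the orthogonality condition above, and using that $\R Z_{\sigma_i}=\emptyset$ (property (ii) of Proposition \ref{existance of sigma}), the real locus $\R X$ sits at a positive distance from $Z_{\sigma_i}$, and the off--diagonal asymptotics of the log--Bergman kernel yield uniform bounds $\norm{\tau_i}_{\mathcal{C}^1(\R X)}=O(e^{-c\sqrt{d}\log d})$ in the smooth case, and $O(e^{-cd})$ in the analytic case. The standard inequality
\begin{equation*}
\norm{\tau}_{\mathcal{C}^1(\R X)} \;\leq\; \Bigl(\sum_{i=1}^m\norm{\tau_i}_{\mathcal{C}^1(\R X)}^2\Bigr)^{1/2}
\end{equation*}
then gives the announced estimate.

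The genuinely non--trivial point, as opposed to the purely bookkeeping first step, is the validity of the single--line--bundle estimate at rate $e^{-c\sqrt{d}\log d}$ (respectively $e^{-cd}$) when one tensors against a \emph{fixed} section $\sigma_i$, rather than the growing power $\sigma_i^{\lfloor td\rfloor}$ used in Proposition \ref{partialbergman}. This is where the analysis genuinely differs: for a fixed divisor the singular weight $\log h_i^k(\sigma_i,\sigma_i)$ remains bounded independently of $d$, so the off--diagonal decay of the logarithmic Bergman kernel is controlled by a Gaussian--type length scale $\sqrt{d}/\log d$ in the smooth setting, upgraded by Agmon--type estimates to a genuinely exponential scale in the analytic setting. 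I do not anticipate any substantively new analytic argument beyond citing and adapting the corresponding single--bundle estimate of \cite{anc5}; the main obstacle is simply to arrange the direct--sum bookkeeping correctly, which the first step handles.
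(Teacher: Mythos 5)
Your proposal is correct and follows essentially the same route as the paper: the paper also reduces to the single--line--bundle case componentwise via the direct--sum structure of the $\mathcal{L}^2$--product, cites the corresponding logarithmic Bergman kernel estimate of \cite{anc5} (Proposition 2.7 there) for each $\tau_i$, and concludes with the inequality $\norm{\tau}_{\mathcal{C}^1(\R X)}\leq\bigl(\sum_{i=1}^m\norm{\tau_i}^2_{\mathcal{C}^1(\R X)}\bigr)^{1/2}$. Nothing further is needed.
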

\begin{proof}
The proof follows the lines of the proof of Proposition \ref{vanishing order}, by using \cite[Proposition 2.7]{anc5} instead of \cite[Proposition 2.6]{anc5}.
\end{proof}
Using the notation of the orthogonal decomposition given in Notation \ref{orthogonal component}, Propositions \ref{partialbergman} and \ref{logbergman} in particular imply the following result.
 \begin{prop}\label{estimatepartial} Let $\sigma=(\sigma_1,\dots,\sigma_m)\in \R H^0(X,\oplus_{i=1}^mL_i^k)$ be a section given by Definition \ref{vanishing order}, for some fixed $k$ large enough. 
\begin{enumerate}
 \item There exists a positive real number $t_0$ such that for any $t\in(0,t_0)$ the following happens. Let $C>0$ and $r\in\mathbb{N}$. For any sequence $w_d$ with $w_d\geq Cd^{-r}$, there exists  $d_0\in\mathbb{N}$, such that for any $d\geq d_0$ and any  $s\in\R H^0(X,\oplus_{i=1}^mL_i^d)$ we have  $$\norm{s_{\sigma^{\lfloor t d\rfloor}}^{\perp}}_{\mathcal{C}^1(\R X)}< w_d\norm{s}_{\mathcal{L}^2}.$$
 \item There exist two positive constants  $c_1$ and $c_2$ such that, for any sequence of real numbers $w_d$ with $w_d\geq c_1e^{-c_2\sqrt{d}\log d}$ and any $s\in\R H^0(X,\oplus_{i=1}^mL_i^d)$ we have 
  $$ \norm{s_{\sigma}^{\perp}}_{\mathcal{C}^1(\R X)}< w_d\norm{s}_{\mathcal{L}^2}.$$
  If, moreover, the real Hermitian metrics $h_i$ on $L_i$ are analytic, then the last estimate is true for any sequence $w_d$ with $w_d\geq c_1e^{-c_2d}$.
\end{enumerate} 
Here, $s_{\sigma}^{\perp}$ and $s_{\sigma^{\lfloor t d\rfloor}}^{\perp}$ are given by Notation \ref{orthogonal component}. 
\end{prop}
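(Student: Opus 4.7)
The plan is to reduce both parts of the statement to the unit-$\mathcal{L}^2$-norm estimates of Propositions \ref{partialbergman} and \ref{logbergman} by a straightforward rescaling argument that exploits the orthogonality of the decomposition in Notation \ref{orthogonal component}.

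First I would observe that, since $s = s_{\sigma^\ell}^0 + s_{\sigma^\ell}^\perp$ is an orthogonal decomposition with respect to the $\mathcal{L}^2$-scalar product \eqref{l2 scalar}, Pythagoras gives
\[
\|s_{\sigma^\ell}^\perp\|_{\mathcal{L}^2} \leq \|s\|_{\mathcal{L}^2}
\]
for any exponent $\ell$. If $s_{\sigma^\ell}^\perp = 0$ the claimed inequality is trivial; otherwise, the normalized section $\tau := s_{\sigma^\ell}^\perp / \|s_{\sigma^\ell}^\perp\|_{\mathcal{L}^2}$ lies in $\R H_{d,\sigma^\ell}^\perp$ with unit $\mathcal{L}^2$-norm, so the concentration estimates of the previous propositions apply to it directly.

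For part \textit{(1)}, I would fix $t \in (0, t_0)$ where $t_0$ is the threshold furnished by Proposition \ref{partialbergman}, and take $\ell = \lfloor td \rfloor$. Proposition \ref{partialbergman} then yields the uniform bound $\|\tau\|_{\mathcal{C}^1(\R X)} = O(d^{-\infty})$, and multiplying through by $\|s_{\sigma^{\lfloor td \rfloor}}^\perp\|_{\mathcal{L}^2}$ gives
\[
\|s_{\sigma^{\lfloor td \rfloor}}^\perp\|_{\mathcal{C}^1(\R X)} \leq O(d^{-\infty}) \cdot \|s\|_{\mathcal{L}^2},
\]
with an implied constant that is uniform in $s$. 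Given any sequence $w_d \geq Cd^{-r}$, I would invoke the $O(d^{-\infty})$ bound at order $d^{-r-1}$, producing a constant $C'$ so that $\|s_{\sigma^{\lfloor td \rfloor}}^\perp\|_{\mathcal{C}^1(\R X)} \leq C'd^{-r-1}\|s\|_{\mathcal{L}^2}$; for $d \geq d_0$ sufficiently large (depending only on $C$, $C'$ and $r$), this is strictly smaller than $w_d\|s\|_{\mathcal{L}^2}$, as required.

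For part \textit{(2)}, the argument is identical with $\ell = 1$ and Proposition \ref{logbergman} in place of Proposition \ref{partialbergman}. In the smooth case this yields $\|\tau\|_{\mathcal{C}^1(\R X)} \leq A e^{-B\sqrt{d}\log d}$ for some constants $A, B > 0$, which rescales to $\|s_\sigma^\perp\|_{\mathcal{C}^1(\R X)} \leq A e^{-B \sqrt{d}\log d}\|s\|_{\mathcal{L}^2}$; choosing $c_1 > A$ and $c_2 = B$ then forces any $w_d \geq c_1 e^{-c_2\sqrt{d}\log d}$ to dominate this bound strictly. The analytic case is treated in exactly the same way, replacing $\sqrt{d}\log d$ by $d$ throughout. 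There is no real obstacle remaining, since the substantive analytic work (logarithmic Bergman-kernel type concentration on $\R H_{d,\sigma^\ell}^\perp$) has already been performed in Propositions \ref{partialbergman} and \ref{logbergman}; the present proposition is essentially a convenient reformulation adapted for Section \ref{SecProof}, where one needs to compare the $\mathcal{C}^1(\R X)$-norm of $s_{\sigma^\ell}^\perp$ against an arbitrary prescribed threshold $w_d$ rather than a fixed asymptotic rate.
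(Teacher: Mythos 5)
Your proposal is correct and follows essentially the same route as the paper: both arguments reduce to the unit-norm estimates of Propositions \ref{partialbergman} and \ref{logbergman} via the orthogonality bound $\norm{s_{\sigma^\ell}^{\perp}}_{\mathcal{L}^2}\leq\norm{s}_{\mathcal{L}^2}$ and a rescaling (the paper normalizes $s$, you normalize $s_{\sigma^\ell}^{\perp}$, which is the same computation), then invoke the $O(d^{-\infty})$ bound at order $d^{-r-1}$ to beat $w_d\geq Cd^{-r}$ for large $d$. If anything, your explicit handling of the case $s_{\sigma^\ell}^{\perp}=0$ and of the uniformity of the implied constants is slightly more careful than the published version.
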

\begin{proof}
Let us prove point \textit{(1)}. 
Take $s\in\R H^0(X,\oplus_{i=1}^mL_i^d)$ and consider the section $(\norm{s}_{\mathcal{L}^2})^{-1}s$, which has $\mathcal{L}^2$--norm equal to $1$.
Now, by Proposition \ref{partialbergman}, as $(\norm{s}_{\mathcal{L}^2})^{-1}\norm{s^{\perp}_{\sigma^{\lfloor t d\rfloor}}}_{\mathcal{L}^2}\leq 1$, there exists a  constant $c_r>0$ (not depending on $d$) such that $(\norm{s}_{\mathcal{L}^2})^{-1}\norm{s_{\sigma^{\lfloor t d\rfloor}}^{\perp}}_{\mathcal{C}^1(\R X)}\leq c_r d^{-r-1}$, which is strictly smaller than $w_d$, for $d$ large enough. This proves  point \textit{(1)} of the result.
 The proof of  point \textit{(2)} follows the same lines, using Proposition \ref{logbergman} instead of Proposition \ref{partialbergman}.
\end{proof}

\section{Proof of the main results}\label{SecProof}
The goal of this section is to prove  Theorems  \ref{theorem approximation} and \ref{theorem rarefaction}. 
Before this, let us recall some notations we have used so far. Let $L_1,\dots,L_m$ be real ample line bundles over a $n$--dimensional real algebraic variety $X$. We equip each line bundle  with a real Hermitian metric  with positive curvature. This induces a $\mathcal{L}^2$--scalar product on $ \R H^0(X,\oplus_{i=1}^mL_i^d)$ defined  in Equation \eqref{l2 scalar} and  a  Gaussian measure on $ \R H^0(X,\oplus_{i=1}^mL_i^d)$ denoted by $\mu_d$ and defined in Equation \eqref{gaussian measure}. 
Finally, we denote by $\Sigma_d$ the real discriminant (see Definition \ref{realdiscriminant}).
\subsection{Tubular neighborhoods of the real discriminant} 
 A \textit{tubular conical neighborhood} of the real discriminant $\Sigma_d$  in $\R H^0(X,\oplus_{i=1}^mL^d_i)$  is a tubular neighborhood of  $\Sigma_d$ in $\R H^0(X,\oplus_{i=1}^mL^d_i)$ which is also a cone (that is, if $s$ is in the neighborhood, then $\lambda s$ is also in the neighborhood, for any $\lambda\in\R^*$). 

  The next lemma  estimates the measure of small tubular conical neighborhoods of the real discriminant $\Sigma_d$. It is a special case of \cite[Theorem 21.1]{burcondition}  (see also \cite[Proposition 4]{diatta} and \cite[Lemma 3.4]{anc5}).
\begin{lemma}[Volume of tubular conical neighborhoods] \label{volume of tubular neighborhood} Let $L_1,\dots,L_m$ be  real Hermitian ample line bundles over a real algebraic variety $X$ of dimension $n$. Then there exists a positive constant $c$ (not depending on $d$), such that, for any sequence $r_d$ of positive real numbers verifying $r_d\leq cd^{-2n}$, one has 
$$\mu_d\{s\in \R H^0(X,\oplus_{i=1}^mL_i^d), \mathrm{dist}_{\Sigma_d}(s)\leq r_d\norm{s}_{\mathcal{L}^2}\}\leq O(r_dd^{2n}).$$
(Here, $\mu_d$ is the Gaussian probability measure defined in Equation \eqref{gaussian measure}.)
\end{lemma}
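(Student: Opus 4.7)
The plan is to reduce the measure of a small tubular conical neighborhood of $\Sigma_d$ to an estimate for a tubular neighborhood of the real zero locus of the explicit polynomial $Q_d$ whose degree was estimated in Section \ref{Secdiscr}, and then to apply a general tube formula of Bürgisser--Cucker type.

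First I would note that, by definition, $\Sigma_d \subset \R\Delta_d \subset \Delta_d \cap \R H^0(X,\oplus_{i=1}^mL_i^d)$, and by Lemma \ref{degree of discriminant} this last set is contained in the real zero locus of the homogeneous real polynomial $Q_d$. Since a smaller set has a larger distance function, $\mathrm{dist}_{\{Q_d=0\}}(s)\leq \mathrm{dist}_{\Sigma_d}(s)$ for every $s$, and consequently
$$\{s\in \R H^0(X,\oplus_{i=1}^mL_i^d) : \mathrm{dist}_{\Sigma_d}(s)\leq r_d\norm{s}_{\mathcal{L}^2}\}\;\subset\;\{s : \mathrm{dist}_{\{Q_d=0\}}(s)\leq r_d\norm{s}_{\mathcal{L}^2}\}.$$
It is therefore enough to bound the Gaussian measure of the right-hand side.

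Next I would exploit the scale invariance of the set in question to reduce to a spherical problem: the cone structure implies that its Gaussian measure equals the uniform measure, on the unit sphere $S_d$ of $(\R H^0(X,\oplus_{i=1}^mL_i^d),\norm{\cdot}_{\mathcal{L}^2})$, of the tube $\{s\in S_d : \mathrm{dist}(s,\{Q_d=0\}\cap S_d)\leq r_d\}$. At this point I would invoke the general Bürgisser--Cucker estimate \cite[Theorem 21.1]{burcondition}: for a homogeneous real polynomial $P$ of degree $D$ on a Euclidean space of dimension $N$, the uniform measure of the $r$-tube around $\{P=0\}\cap S^{N-1}$ is, for $r$ small enough, controlled by a universal constant times $D\cdot N \cdot r$. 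Plugging in $D=\deg(Q_d)=O(d^n)$ from Lemma \ref{degree of discriminant} together with $N=N_d=O(d^n)$ yields the announced bound $O(r_d d^{2n})$, valid exactly in the regime $r_d\leq c d^{-2n}$ that makes the linearization underlying the tube formula legitimate.

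The main technical obstacle is to ensure that the tube formula is applied with the right constants: one needs not only the polynomial degree bound from Lemma \ref{degree of discriminant}, but also a polynomial-in-$d$ control on the norm of the coefficients of $Q_d$ entering the constant in \cite[Theorem 21.1]{burcondition}. Since $Q_d$ is the defining polynomial of a standard discriminant hypersurface obtained by intersecting $\P\Delta_d$ with generic lines of multidegree $(1,\dots,1)$, such a control is automatic from Section \ref{seclefanddeg}; the substantive content of the lemma is really the degree estimate of Lemma \ref{degree of discriminant}, which was the purpose of Section \ref{Secdiscr}.
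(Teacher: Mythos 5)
Your proposal is correct and follows essentially the same route as the paper: reduce the Gaussian measure of the conical tube to the spherical measure of the tube around $\Sigma_d\cap S_d$, note that this set lies in the zero locus of the polynomial $Q_d$ of degree $O(d^n)$ from Lemma \ref{degree of discriminant}, and apply \cite[Theorem 21.1]{burcondition} together with $N_d=O(d^n)$. The only superfluous point is your worry about controlling the coefficients of $Q_d$: the tube estimate of \cite[Theorem 21.1]{burcondition} depends only on the degree of the hypersurface and the ambient dimension, not on the size of the defining polynomial's coefficients, so no such control is needed.
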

\begin{proof} Let us denote by $S_d$ the unit sphere in $\R H^0(X,\oplus_{i=1}^mL_i^d)$, that is $$S_d:=\{s\in \R H^0(X,\oplus_{i=1}^mL_i^d),\norm{s}_{\mathcal{L}^2}=1\}.$$ Let us also denote by  $S\Sigma_d$ the trace of the discriminant on $S_d$, that is $S\Sigma_d:=S_d\cap \Sigma_d$.
Now, if we denote by  $\nu_d$ the probability measure on $S_d$ induced by its volume form (that is, for any $U\subset S_d$, $\nu_d(U)=\Vol(U)\Vol(S_d)^{-1}$), then the Gaussian measure of every cone $C_d$ in $\R H^0(X,\oplus_{i=1}^mL_i^d)$ equals $\nu_d(C_d\cap S_d)$.
This implies that the Gaussian measure  of the cone $C_d=\{\mathrm{dist}_{\Sigma_d}(s)\leq r_d\norm{s}_{\mathcal{L}^2}\}$ we are interested in  equals $\nu_d\{s\in S_d, \mathrm{dist}(s,S\Sigma_d)\leq r_d\}.$ In order to obtain the result, it is then equivalent to prove the estimate \begin{equation}\label{numeasure0}\nu_d\{s\in S_d, \mathrm{dist}(s,S\Sigma_d)\leq r_d\}\leq O(r_dd^{2n}).
\end{equation}

Recall, that, by Lemma \ref{degree of discriminant}, there exists a polynomial of degree bounded by $cd^n$ (for some $c>0$ independent of $d$)  whose zero locus contains $S\Sigma_d$. We are then in the hypotheses of \cite[Theorem 21.1]{burcondition} which gives us the estimate
\begin{equation}\label{numeasure}
\nu_d\{s\in S_d, \mathrm{dist}(s,S\Sigma_d)\leq r_d\}\leq c'N_dd^nr_d
\end{equation}
for some constant $c'>0$ (independent of $d$), where $N_d$ is the dimension of $ \R H^0(X,\oplus_{i=1}^mL^d)$. By Riemann-Roch Theorem, we have that the dimension $N_d$  of $ \R H^0(X,\oplus_{i=1}^mL^d)$ is  $O(d^n)$, so that the right-hand side of \eqref{numeasure} is $O(r_dd^{2n})$, which gives us \eqref{numeasure0} and, then, the result.
\end{proof}

\subsection{Quantitative stability of real sections} 
In this section, we study how much we can perturb a real section $s\in  \R H^0(X,\oplus_{i=1}^mL_i^d)\setminus \Sigma_{d}$ without changing the topology of its real locus. This is the content of Corollary \ref{isotopy} which will use the estimates on the distance to the real discriminant   $\Sigma_d$ proved in Section \ref{sec distance}. In the case of Kostlan polynomials similar results can be found in \cite[Proposition 3]{diatta} and \cite[Theorem 7]{breiding}.
\begin{lemma}\label{thomisotopy} Let $s\in  \R H^0(X,\oplus_{i=1}^mL_i^d)\setminus \Sigma_{d}$ be a  real section (see Definition \ref{realdiscriminant} for the definition of the real discriminant $\Sigma_d$). Then, for any real global section $s'\in \R H^0(X,\oplus_{i=1}^mL_i^d)$ such that 
$$\norm{s-s'}_{\mathcal{C}^1(\R X)}< \mathrm{dist}_{\mathcal{C}^1(\R X)}(s,\Sigma_d),$$
 we have that the pairs $(\R X, \R Z_s)$ and $(\R X, \R Z_{s'})$ are isotopic. (Here, $\norm{\cdot}_{\mathcal{C}^1(\R X)}$ is given by Definition \ref{defC1}.)
\end{lemma}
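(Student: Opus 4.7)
The plan is to prove this via Thom's isotopy lemma applied to the straight-line path $s_t := (1-t)s + t s'$, $t\in[0,1]$. The first step is to verify that this entire path avoids the real discriminant $\Sigma_d$. For every $t\in[0,1]$, linearity of the $\mathcal{C}^1(\R X)$-norm gives
\[
\|s - s_t\|_{\mathcal{C}^1(\R X)} = t\,\|s - s'\|_{\mathcal{C}^1(\R X)} \leq \|s - s'\|_{\mathcal{C}^1(\R X)} < \mathrm{dist}_{\mathcal{C}^1(\R X)}(s, \Sigma_d),
\]
so by definition of $\mathrm{dist}_{\mathcal{C}^1(\R X)}(s,\Sigma_d)$, one cannot have $s_t\in\Sigma_d$; equivalently, each $s_t$ vanishes transversally along $\R X$.

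The second step is to assemble the real zero loci into a smooth family. I would consider the evaluation map $F:\R X\times[0,1]\to \bigoplus_{i=1}^m \R L_i^d$ (viewed on the pullback bundle) defined by $F(x,t):=s_t(x)$, and set $\mathcal{Z}:=F^{-1}(0)$. By the transversality established in the first step, $d_x s_t$ has rank $m$ on $T_x\R X$ at every point of $\mathcal{Z}$, so the partial differential $\partial_x F$ alone is already surjective onto the $m$-dimensional real fiber. Hence $F$ is transverse to the zero section, and $\mathcal{Z}$ is a smooth submanifold of $\R X\times[0,1]$ of dimension $n-m+1$, whose slice over $\{t\}$ is exactly $\R Z_{s_t}$.

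The third step is to check that the projection $\pi:\mathcal{Z}\to[0,1]$ is a proper submersion. Properness is immediate from the compactness of $\R X$ (the real locus of a real projective variety is compact). For surjectivity of $d\pi$, note that a tangent vector $(v,\tau)\in T_{(x,t)}\mathcal{Z}$ lies in $\ker d\pi$ iff $\tau=0$ and $v\in\ker d_x s_t$; since $d_x s_t$ has rank $m$, this kernel has dimension $n-m$, which matches the fiber dimension, forcing $d\pi$ to be surjective. By Ehresmann's theorem, $\pi$ is then a locally trivial smooth fibration over the contractible base $[0,1]$, hence trivial.

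Finally, to promote this trivialization to an isotopy of the pair, I would lift the vector field $\partial_t$ on $[0,1]$ to a smooth vector field $V$ along $\mathcal{Z}$ tangent to $\R X\times[0,1]$, extend it smoothly to all of $\R X\times[0,1]$ by a partition of unity subordinate to local trivializations of $\pi$, and integrate. Because $\R X$ is compact, the flow exists for all $t\in[0,1]$, yielding a smooth isotopy $\Phi_t:\R X\to\R X$ with $\Phi_0=\mathrm{Id}$ and $\Phi_t(\R Z_s)=\R Z_{s_t}$. Taking $t=1$ gives the required isotopy $(\R X,\R Z_s)\simeq(\R X,\R Z_{s'})$. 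The only genuinely substantive step is the first one: the hypothesis $\|s-s'\|_{\mathcal{C}^1(\R X)}<\mathrm{dist}_{\mathcal{C}^1(\R X)}(s,\Sigma_d)$ is designed precisely to guarantee uniform transversality along the whole segment, and everything after that is a standard Ehresmann--Thom type argument.
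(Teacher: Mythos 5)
Your proposal is correct and follows exactly the paper's route: the paper also takes the straight-line path $s_t=(1-t)s+ts'$, observes that the hypothesis forces every $s_t$ to vanish transversally along $\R X$ (i.e.\ the path avoids $\Sigma_d$), and then invokes Thom's isotopy lemma. The only difference is that you unpack the Ehresmann--Thom argument (smoothness of the total zero set, proper submersion, lifting $\partial_t$) which the paper cites as a black box.
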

\begin{proof}
Consider the path of real sections $s_t=(1-t)s+ts'$ for $t\in[0,1]$. Then, by the hypothesis $\norm{s-s'}_{\mathcal{C}^1(\R X)}< \mathrm{dist}_{\mathcal{C}^{1}(\R X)}(s,\Sigma_d),$ we have that $s_t$ is a path of real sections vanishing transversally along $\R X$ for any $t\in[0,1]$. By Thom's Isotopy Lemma, this implies that the pairs $(\R X,\R Z_{s_{t_0}})$ and $(\R X,\R Z_{s_{t_1}})$ are isotopic for any $t_0,t_1\in[0,1]$. Taking $t_0=0$ and $t_1=1$ we have the result.
\end{proof}
\begin{cor}\label{isotopy}  There exists a positive integer $d_0$ such that for any $d\geq d_0$ and any real section $s\in \R H^0(X,\oplus_{i=1}^mL_i^d)\setminus \Sigma_d$, the following happens. For  any real  section $s'\in\R H^0(X,\oplus_{i=1}^mL_i^d)$ such that $$\norm{s-s'}_{\mathcal{C}^1(\R X)}< \mathrm{dist}_{\Sigma_d}(s),$$ we have that the pairs $(\R X,\R Z_s)$ and $(\R X, \R Z_{s'})$ are isotopic. (Here, $\norm{\cdot}_{\mathcal{C}^1(\R X)}$ is given by Definition \ref{defC1}.)
\end{cor}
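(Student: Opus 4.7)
The plan is essentially to stitch together the two preceding results: Lemma \ref{thomisotopy} gives the desired isotopy conclusion from a $\mathcal{C}^1(\R X)$-smallness of the perturbation compared to the $\mathcal{C}^1(\R X)$-distance to the real discriminant, while Lemma \ref{distance to the discriminant} shows that, for $d$ large enough, the $\mathcal{L}^2$-distance to $\Sigma_d$ is always bounded above by the $\mathcal{C}^1(\R X)$-distance to $\Sigma_d$. Chaining these together gives the corollary essentially for free.

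More precisely, I would choose $d_0$ to be the integer provided by Lemma \ref{distance to the discriminant}, so that for all $d \geq d_0$ and every $s \in \R H^0(X,\oplus_{i=1}^m L_i^d)$ one has the inequality
\[
\mathrm{dist}_{\Sigma_d}(s) \;\leq\; \mathrm{dist}_{\mathcal{C}^1(\R X)}(s,\Sigma_d).
\]
Then, given any $s \in \R H^0(X,\oplus_{i=1}^m L_i^d)\setminus\Sigma_d$ and any real section $s'$ with $\norm{s-s'}_{\mathcal{C}^1(\R X)} < \mathrm{dist}_{\Sigma_d}(s)$, the inequality above yields $\norm{s-s'}_{\mathcal{C}^1(\R X)} < \mathrm{dist}_{\mathcal{C}^1(\R X)}(s,\Sigma_d)$, which is exactly the hypothesis of Lemma \ref{thomisotopy}. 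Applying that lemma produces the isotopy between the pairs $(\R X,\R Z_s)$ and $(\R X,\R Z_{s'})$.

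There is no real obstacle at this stage; the substantive analytic work has already been carried out in Lemma \ref{distance to the discriminant}, whose proof compared the $\mathcal{L}^2$ and $\mathcal{C}^1(\R X)$ norms of perturbations at a point of $\R X$ via peak sections, with the decisive factor of $d^{1/2}$ arising from the first-order peak sections (Lemma \ref{estimates of first peak sections}). The point of Corollary \ref{isotopy} is simply to rephrase the isotopy criterion in terms of the $\mathcal{L}^2$-distance to $\Sigma_d$, which is the quantity one actually controls probabilistically via Lemma \ref{volume of tubular neighborhood}, so that it can be fed directly into the proof of Theorem \ref{theorem approximation}.
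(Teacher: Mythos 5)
Your proposal is correct and is exactly the paper's argument: the paper proves Corollary \ref{isotopy} by directly combining Lemma \ref{distance to the discriminant} (which supplies the $d_0$ and the inequality $\mathrm{dist}_{\Sigma_d}(s)\leq \mathrm{dist}_{\mathcal{C}^1(\R X)}(s,\Sigma_d)$) with Lemma \ref{thomisotopy}, just as you chain them. Nothing is missing.
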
 
\begin{proof}
The result follows directly from Lemmas \ref{distance to the discriminant} and \ref{thomisotopy}.
\end{proof}
\begin{lemma}\label{rapiddecay} Let $\sigma\in  \R H^0(X,\oplus_{i=1}^mL_i^k)$ be a section given by Definition \ref{vanishing order}, for some fixed $k$ large enough.  Then, we have the following estimates as $d\rightarrow\infty$.
\begin{enumerate}
\item There exists $t_0>0$ such that, for any $t\in (0,t_0)$,  we have $$\mu_d\big\{s\in  \R H^0(X,\oplus_{i=1}^mL_i^d), \norm{s_{\sigma^{\lfloor t d\rfloor}}^{\perp}}_{\mathcal{C}^1(\R X)}<\mathrm{dist}_{\Sigma_d}(s)\big\}\geq 1-O(d^{-\infty}).$$
\item There exists a positive $c>0$ such that$$\mu_d\big\{s\in \R H^0(X,\oplus_{i=1}^mL^d), \norm{s_{\sigma}^{\perp}}_{\mathcal{C}^1(\R X)}<\mathrm{dist}_{\Sigma_d}(s)\big\}\geq 1-O(e^{-c\sqrt{d}\log d}).$$ 
Moreover, if the real Hermitian metrics on $L_1,\dots,L_m$ are  analytic, then  the last measure is even bigger than $1-O(e^{-cd})$.
\end{enumerate}
\end{lemma}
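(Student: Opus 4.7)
The proof combines the deterministic pointwise decay of the orthogonal component $s_{\sigma^\ell}^{\perp}$ (Proposition \ref{estimatepartial}) with the probabilistic tube estimate around the real discriminant (Lemma \ref{volume of tubular neighborhood}). The elementary observation driving both parts is the following: if $w_d>0$ is a sequence for which the deterministic inequality $\norm{s_{\sigma^\ell}^{\perp}}_{\mathcal{C}^1(\R X)} < w_d\norm{s}_{\mathcal{L}^2}$ holds for every real section $s \in \R H^0(X,\oplus_{i=1}^m L_i^d)$, then the event whose probability we want to bound below contains $\{s : \mathrm{dist}_{\Sigma_d}(s) \geq w_d\norm{s}_{\mathcal{L}^2}\}$. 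Consequently its complement has $\mu_d$-measure at most $\mu_d\{s : \mathrm{dist}_{\Sigma_d}(s) < w_d\norm{s}_{\mathcal{L}^2}\}$, to which Lemma \ref{volume of tubular neighborhood} applies as soon as $w_d \leq c d^{-2n}$.

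For part \textit{(1)}, I would take $\ell = \lfloor td\rfloor$ with $t\in(0,t_0)$ and set $w_d = d^{-r}$ for an arbitrary but fixed integer $r > 2n$. Proposition \ref{estimatepartial}\textit{(1)} then provides the required deterministic bound for all $d$ large enough, while Lemma \ref{volume of tubular neighborhood} (whose hypothesis is eventually satisfied) bounds the exceptional measure by $O(w_d d^{2n}) = O(d^{2n-r})$. Since $r$ may be chosen arbitrarily large, the bad set has $\mu_d$-measure $O(d^{-\infty})$, and \textit{(1)} follows.

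For part \textit{(2)}, I would take $\ell=1$ and $w_d = c_1 e^{-c_2\sqrt{d}\log d}$ (respectively $w_d = c_1 e^{-c_2 d}$ in the analytic case), as furnished by Proposition \ref{estimatepartial}\textit{(2)}. These sequences are eventually dominated by $c d^{-2n}$, so Lemma \ref{volume of tubular neighborhood} again applies and bounds the exceptional measure by $O(w_d d^{2n})$. The polynomial factor $d^{2n}$ is absorbed into the exponential by slightly decreasing the constant $c_2$, yielding the announced estimates $O(e^{-c\sqrt{d}\log d})$ and $O(e^{-cd})$.

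I do not expect a substantial obstacle, since both ingredients are already in place: the argument reduces to choosing $w_d$ so as to balance the polynomial volume of a $w_d$-tube around $\Sigma_d$ against the (super-)polynomial smallness of $s_{\sigma^\ell}^\perp$ along $\R X$. The only verification needed is the compatibility condition $w_d \leq c d^{-2n}$ required by Lemma \ref{volume of tubular neighborhood}, which both proposed choices satisfy for $d$ sufficiently large. Conceptually, the content of the lemma is that the exponentially (or faster-than-polynomially) small \emph{pointwise} norm of $s_{\sigma^\ell}^\perp$ exceeds $\mathrm{dist}_{\Sigma_d}(s)$ only when $s$ lies inside an extremely thin conical tube around the real discriminant, and the Gaussian mass of such a tube is controlled by the polynomial degree bound on $\Sigma_d$ proved in Lemma \ref{degree of discriminant}.
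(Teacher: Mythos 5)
Your proposal is correct and follows essentially the same route as the paper: both combine the deterministic bound of Proposition \ref{estimatepartial} with the conical tube estimate of Lemma \ref{volume of tubular neighborhood}, choosing the same radius $w_d$ for both so that the bad event is contained in a tube of $\mu_d$-measure $O(w_d d^{2n})$. The only cosmetic difference is that the paper first writes the estimate with two independent sequences $r_d$ and $w_d$ and then sets $w_d=r_d$, whereas you make that choice from the outset.
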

\begin{proof}
First, remark that, by Proposition \ref{volume of tubular neighborhood}, for any $m\in\mathbb{N}$, setting $r_d=C_1d^{-2n-m}$, we have 
\begin{equation}\label{rapid1}
\mu_d\big\{s\in\R H^0(X,\oplus_{i=1}^mL_i^d), \mathrm{dist}_{\Sigma_d}(s)> r_d\norm{s}_{\mathcal{L}^2}\big\}\geq 1-O(d^{-m}).
\end{equation}
Also, by  point \textit{(1)} of Proposition \ref{estimatepartial}, for any $t<t_0$, any integer $r$,   any sequence $w_d$ of the form $C_2d^{-r}$, any  $d$ large enough and any real section $s\in\R H^0(X,\oplus_{i=1}^mL_i^d)$, we have
\begin{equation}\label{rapid2}
\norm{s_{\sigma^{\lfloor t d\rfloor}}^{\perp}}_{\mathcal{C}^1(\R X)}< w_d\norm{s}_{\mathcal{L}^2}.
\end{equation}
Putting together \eqref{rapid1} and \eqref{rapid2}, we have that, for any such sequences $r_d$ and $w_d$,
\begin{equation}\label{rapid3}
\mu_d\big\{s\in\R H^0(X,\oplus_{i=1}^mL_i^d), \norm{s_{\sigma^{\lfloor t d\rfloor}}^{\perp}}_{\mathcal{C}^1(\R X)}< \frac{w_d}{r_d}\mathrm{dist}_{\Sigma_d}(s)\big\}\geq 1-O(d^{-m}).
\end{equation}
By choosing $w_d=r_d$, we then obtain that for any $m\in\mathbb{N}$
\begin{equation}\label{rapid3}
\mu_d\big\{s\in\R H^0(X,\oplus_{i=1}^mL^d), \norm{s_{\sigma^{\lfloor t d\rfloor}}^{\perp}}_{\mathcal{C}^1(\R X)}< \mathrm{dist}_{\Sigma_d}(s)\big\}\geq 1-O(d^{-m})
\end{equation}
which proves the point \textit{(1)} of the proposition. Point \textit{(2)} of the proposition follows the same lines, using  point \textit{(2)} of Proposition \ref{estimatepartial} and   setting  $r_d=d^{-2n}e^{-c\sqrt{d}}$, where $c$ is given by point \textit{(2)} of Proposition \ref{estimatepartial}.
\end{proof}
\subsection{Proof of the main theorems} We now prove Theorems \ref{theorem approximation} and \ref{theorem rarefaction}.
\begin{proof}[Proof of Theorem \ref{theorem approximation}]
Let us start with the proof of point \textit{(1)} of the theorem. \\
Let $\sigma\in  \R H^0(X,\oplus_{i=1}^mL_i^k)$ be a section given by Definition \ref{vanishing order}, for some fixed $k$ large enough.
We want to prove that there exists $\alpha_0<1$ such that for any $\alpha> \alpha_0$, the Gaussian measure of the set 
\begin{equation}\label{measuretoestimate}
\big\{s\in \R H^0(X,\oplus_{i=1}^mL_i^d), \exists\hspace{0.5mm} s'\in \R H^0(X,\oplus_{i=1}^mL_i^{\lfloor  \alpha d\rfloor}) \hspace{1mm} \textrm{such that} \hspace{1mm} (\R X, \R Z_s)\sim (\R X, \R Z_{s'}) \big\}
\end{equation}
is at least $1-O(d^{-\infty})$, as $d\rightarrow\infty$, where $(\R X, \R Z_s)\sim (\R X, \R Z_{s'})$ means there the two pairs are isotopic. 

Let us consider the positive real $t_0$ given by Proposition \ref{rapiddecay}\textit{(1)} and set $\alpha_0=1-kt_0$. Then, for any  $\alpha> \alpha_0,$ there exists $t< t_0$, such that the inequality $\lfloor \alpha d\rfloor\geq d-k\lfloor t d\rfloor$ holds. Let us fix such $\alpha$ and $t$.  For any $s\in  \R H^0(X,\oplus_{i=1}^mL_i^d)$, let us write the orthogonal decomposition $s=s_{\sigma^{\lfloor t d\rfloor}}^{\perp}+s_{\sigma^{\lfloor t d\rfloor}}^{0}$ given by Notation \ref{orthogonal component}.
By Lemma \ref{isotopy}, if the $\mathcal{C}^1(\R X)$-norm of $s_{\sigma^{\lfloor t d\rfloor}}$ is smaller than $\mathrm{dist}_{\Sigma_d}(s)$, then the pairs $(\R X, \R Z_s)$ and $(\R X, \R Z_{s_{\sigma^{\lfloor t d\rfloor}}^{0}})$ are isotopic. This implies that the Gaussian measure of the set \eqref{measuretoestimate} is bigger than the Gaussian measure of the set
\begin{equation}\label{measuretoestimate2}
\bigg\{s\in \R H^0(X,\oplus_{i=1}^mL_i^d), \norm{s_{\sigma^{\lfloor t d\rfloor}}^{\perp}}_{\mathcal{C}^1(\R X)}<\mathrm{dist}_{\Sigma_d}(s) \bigg\}
\end{equation}
which, in turn, by  Proposition \ref{rapiddecay}\textit{(1)}, is bigger than $1-O(d^{-\infty})$. We have  proved that the pair $(\R X, \R Z_s)$ is isotopic to the pair $(\R X, \R Z_{s_{\sigma^{\lfloor t d\rfloor}}^{0}})$ with probability $1-O(d^{-\infty})$. Now, the section  $s_{\sigma^{\lfloor t d\rfloor}}^{0}$ lies in the space $\R H_{d,\sigma^{\lfloor t d\rfloor}}$ so that  there exists $s'\in  \R H^0(X,\oplus_{i=1}^mL_i^{d-k\lfloor t d\rfloor})$ such that $s_{\sigma^{\lfloor t d\rfloor}}^{0}=\sigma^{\lfloor t d\rfloor}\otimes s'$. Assertion \textit{(1)} of the theorem then follows from the fact that  the real zero locus of $s_{\sigma^{\lfloor t d\rfloor}}^{0}$ coincides with the real zero locus of $s'$. Indeed,  the real zero locus of $s_{\sigma^{\lfloor t d\rfloor}}^{0}$ equals $\R Z_\sigma\cup \R Z_{s'}$ and this is equal to $\R Z_{s'}$, because  $\R Z_\sigma=\emptyset$.

The proof of the assertion \textit{(2)} of the theorem follows the same lines,  using the orthogonal decomposition  $s=s_{\sigma}^{\perp}+s_{\sigma}^{0}$ and  Proposition \ref{rapiddecay}\textit{(2)}.
\end{proof}

\begin{proof}[Proof of Theorem \ref{theorem rarefaction}]
We start with the proof of Assertion \textit{(1)}.
Recall that we want to prove that for small enough $\epsilon>0$, we have 
$$\mu_d\{s\in\R H^0(X,\oplus_{i=1}^mL^d), b_*(\R Z_s) < (1-\epsilon)b_*(Z_s)\}=1-O(d^{-\infty})$$
as $d\rightarrow\infty$.
 Let $\alpha_0$ be given by point \textit{(1)} of Theorem \ref{theorem approximation} and denote $\delta_0=1-\alpha_0$. By Theorem \ref{theorem approximation}\textit{(1)}, for any $0<\delta<\delta_0$, the real zero locus of a global section $s$ of $L^d$ is diffeomorphic to the real zero locus of a global section $s'$ of $L^{\lfloor (1-\delta)d\rfloor}$  with probability $1-O(d^{-\infty})$. Now, by Smith-Thom inequality (see Equation \eqref{smith-thom}), the total Betti number $b_*(\R Z_{s'})$ of the real zero locus a generic section $s'$ of $L^{\lfloor (1-\delta)d\rfloor}$ is smaller or equal than $b_*(Z_{s'})$, which, by Proposition \ref{totalBetti}, has the asymptotic $b_*(Z_{s'})= \mathrm{v}(L_1,\dots,L_m)\big(\lfloor (1-\delta)d\rfloor\big)^n+O(d^{n-1})$. In particular, with probability $1-O(d^{-\infty})$, the  total Betti number $b_*(\R Z_{s})$ of the real zero locus of a section $s$ of $L^d$ is smaller  than $\mathrm{v}(L_1,\dots,L_m)\big(\lfloor (1-\delta/2)d\rfloor\big)^n$, as $d\rightarrow\infty$. Choosing $\epsilon$ so that $(1-\epsilon)>(1-\delta/2)^n$ we have  the result.

\noindent Assertion \textit{(2)} is proved in the same way, using Theorem \ref{theorem approximation}\textit{(2)} instead of Theorem \ref{theorem approximation}\textit{(1)}.
\end{proof}
\bibliographystyle{plain}
\bibliography{biblio}
\end{document}